\pgfplotsset{compat=1.17}
\title{Variance bounds for a class of {biochemical birth/death like processes} via a discrete expansion and spectral properties of the master equation
\thanks{{{{This manuscript is a significantly extended version of conference paper \cite{PuglieseCarratelli2020} and includes the proofs of the results, {which are centred  upon spectral properties of the master equation used in conjunction with the Newton Series expansion,} and includes also additional discussion.}}}}}
\author{Giovanni Pugliese Carratelli\thanks{Department of Engineering, University of Cambridge, Trumpington Street - CB2 1PZ Cambridge, United Kingdom (\email{\{gp459,icl20\}@cam.ac.uk}).}
\and Ioannis Lestas\footnotemark[2]}
\acrodef{CME}       [CME]           {Chemical Master Equation}
\acrodef{LMI}       [LMI]           {Linear Matrix Inequality}
\acrodef{ODE}       [ODE]           {Ordinary Differential Equation}
\acrodef{SDE}       [SDE]           {Stochastic Differential Equation}
\acrodef{LNA}       [LNA]           {Linear Noise Approximation} 
    \let\old@@@nomenclature=\@@@nomenclature
        \newcounter{@nomcount} \setcounter{@nomcount}{0}%
        \renewcommand\the@nomcount{\two@digits{\value{@nomcount}}}% Ensure 10>01
        \def\@@@nomenclature[#1]#2#3{% Taken from package documentation
          \addtocounter{@nomcount}{1}%
        \def\@tempa{#2}\def\@tempb{#3}%
          \protected@write\@nomenclaturefile{}%
          {\string\nomenclatureentry{\the@nomcount\nom@verb\@tempa @[{\nom@verb\@tempa}]%
          \begingroup\nom@verb\@tempb\protect\nomeqref{\theequation}%
          |nompageref}{\thepage}}%
          \endgroup
          \@esphack}%
\begin{document}
\maketitle
\begin{abstract}
We consider a class of birth/death like process corresponding to coupled biochemical reactions and consider the problem of quantifying the variance of the molecular species in terms of the rates of the reactions.
In particular, we address this problem in a configuration where a species is formed with a rate that depends nonlinearly on another spontaneously formed species. By making use of an appropriately formulated expansion based on the Newton series, in conjunction with spectral properties of the master equation, we derive an analytical expression that provides a hard bound for the variance. We show that this bound is exact when the propensities are linear, with numerical simulations demonstrating that this bound is also very close to the actual variance. An analytical expression for the covariance of the species is also derived.
\end{abstract}

% REQUIRED
\begin{keywords}
{biochemical reactions, birth-death processes, master equation, variance bounds, Newton series.}
\end{keywords}

% REQUIRED
\begin{MSCcodes}
	93E03, 92C40, 92C42
\end{MSCcodes}
\section{Introduction}\label{sec.Introduction}
Biochemical reactions are often modeled as birth/death like processes with rates that depend on other molecular species.
This leads to continuous time jump Markov process, that can be analysed by means of their forward Kolmogorov equation, known as the \ac{CME}, or via stochastic simulations \cite{Gillespie2007}.
Quantifying the moments of molecular species is important as this affects the functionality of various biochemical pathways within cells. Assessing variance in enzymatic networks \cite{Heinrich_2002} can for instance help understand how enzyme kinetics pathways are an effective signalling configuration and clarify how cells modulate protein activity in response to external stimuli. When the rates of the reactions depend nonlinearly on other molecular species however,  analytical expressions for the moments are in general not feasible. There are many computational tools that have been developed in the literature for approximating the moments, based on either time simulations or  numerical approximations to solutions of the master equation and its corresponding moments.  The latter include the derivation of expressions that provide characterisations of the stationary distributions \cite{Cappelletti2021}, computational methods for level-dependent quasi-birth-and-death processes with finite \cite{Gaver1984} and infinite number of states \cite{Kharoufeh2011}, and the Finite State Projection algorithm \cite{Munsky2006}. Optimization based formulations by means of semidefinite programming have been addressed in \cite{Kuntz2019}, \cite{Lasserre2009}, \cite{Ghusinga2017,Dowdy2018,Sakurai_2018}, and approximate moment dynamics with closure properties have been studied in \cite{Hespanha2005}, \cite{Singh2011} and \cite{Smadbeck2013}.

Nevertheless, obtaining analytical expressions for the variance in terms of biological observables, such as expectations of the rates and molecule numbers, is important as this can facilitate the physical understanding of such processes and also motivate design in synthetic biology. \acl{LNA} \cite{Kampen2007} is a tool frequently used in this context, whereby the nonlinear reaction rates are linearised about the equilibrium values for the mean, \emph{i.e.} the first two terms of their Taylor series expansion are retained. Such approximations are valid when the reaction rates are approximately nonlinear or when there are large molecule numbers. They are, however, usually not very accurate in intermediate and low molecule numbers as is often the case in biology.

In this paper we present an alternative approach whereby {an appropriately formulated expansion based on the \emph{Newton series} %{\cite[p. 466 and p. 481]{NewtonEng,Newton}}},
\cite{NewtonEng}, \cite{Newton}}, a discrete analog of the Taylor series, is used to derive a hard {lower} bound for the variance for a class of biochemical reactions. In particular, the bound is derived for a species that is formed with bursts with a nonlinear rate that is a function of another spontaneously formed species.

{{A key idea in our analysis is to exploit} the eigenfunctions of the considered master equation {and show} that {these can form} a basis for the {Newton series} {expansion}. {The latter is then used to obtain an analytical expression that provides a bound} for the variance {in the molecule numbers}. We also show that the bound is exact when the reaction rates are linear.}

{Furthermore, a side result associated with the covariance of the {molecule number of the species under consideration is derived, whereby the {\em Newton series} expansion is exploited to provide an exact analytical expression for the covariance.}}

{In the paper we also} use numerical {computations of the variance} to investigate how close the bound {derived} is to the actual variance, and we find that, unlike {with {the} \acl{LNA},}
this is very close to the true variance even in regimes where the reaction rate is highly {nonlinear.}

{The paper is structured as follows. In Section \ref{sec.Preliminaries} we provide the problem formulation. The main results are given in Section \ref{sec.Results}. The proofs of the results are provided in the Section \ref{sec.MYAppendix}.
A numerical evaluation of the results is provided in Section \ref{sec.Examples}. Finally, conclusions are drawn in Section \ref{sec.Conclusions}. }

\begin{thenomenclature} 
\nomgroup{Symbols}
  \item [{$\mathbb{R}_{>}$}]\begingroup Set of positive real numbers $\{x \in \mathbb{R}: x > 0\}$\nomeqref {1.0}\nompageref{2}
  \item [{$\mathbb{Z}_{\geq}$}]\begingroup Set of non-negative integers $\{0, 1, 2, 3 \ldots \}$\nomeqref {1.0}\nompageref{2}
  \item [{$\mathbb{R}_{\geq}$}]\begingroup Set of non-negative real numbers $\{x \in \mathbb{R}: x \geq 0\}$\nomeqref {1.0}\nompageref{2}
  \item [{$\mathbb{Z}_>$}]\begingroup Set of positive integers $\{1, 2, 3 \ldots \}$\nomeqref {1.0}\nompageref{2}
  \item [{$\mathbb{P}_Q(q)$}]\begingroup Probability {integer-valued random variable $Q$ takes value $q$}\nomeqref {1.0}\nompageref{2}
  \item [{$A \sim \mathcal{P}(x)$}]\begingroup Random variable $A$ has Poisson probability distribution with mean $x$\nomeqref {1.0}\nompageref{2}
  \item [{$\mathbb{R}[A]$}]\begingroup Set of polynomials in $A$ with {real coefficients}\nomeqref {1.0}\nompageref{2}
  \item [{$\mathbb{E}[X]$}]\begingroup Expectation of random variable $X$\nomeqref {1.0}\nompageref{2}
  \item [{$\mathbb{E}_{\mathcal{P}}[\cdot]$}]\begingroup {Expectation with} respect to {probability distribution $\mathcal{P}$.}\nomeqref {1.0}\nompageref{2}
  \item [{$var(X)$}]\begingroup Variance of random variable $X$\nomeqref {1.0}\nompageref{2}
  \item [{$cov(X,Y)$}]\begingroup Covariance of random variables $X$ and $Y$\nomeqref {1.0}\nompageref{2}
  \item [{$\rho_{AB}$}]\begingroup {Pearson correlation} coefficient between random variables $A$ and $B$, \emph{i.e.}\\ {$\rho_{AB}= \frac{cov(A,B)}{\sqrt{var(A)}\sqrt{var(B)}}$}\nomeqref {1.0}\nompageref{2}
  \item [{$dom(f)$}]\begingroup Domain of function $f$\nomeqref {1.0}\nompageref{2}
  \item [{$\Delta_k[\cdot]$}]\begingroup $k$ step finite difference of {a function $f:\mathbb{Z}\to\mathbb{Z}$, defined as $\Delta_k[f](A) := f(A+k) - f(A)$}\nomeqref {1.0}\nompageref{2}
  \item [{$\Delta_k^p[\cdot]$}]\begingroup $pk$ step finite difference operator {defined for $p>1, p \in \mathbb{Z}_>,$ via the recursion $\Delta_k^1[\Delta_k^{p-1}[\cdot]]$ with $\Delta_k^0[.]$ the identity operator, and $\Delta_k^1[.]:=\Delta_k[.]$.}\nomeqref {1.0}\nompageref{2}
  \item [{$(A)_k$}]\begingroup Falling factorial of $A \in \mathbb{Z}_{\geq}$ with $k \in \mathbb{Z}_{\geq}$: if $k \neq 0$ then $(A)_k = \prod_{n=0}^{k-1}A-n$, if $k = 0$ then $(A)_0 = 1$\nomeqref {1.0}\nompageref{2}
  \item [{$\mathbb{I}$}]\begingroup Identity  operator: $\mathbb{I}[X] = X$\nomeqref {1.0}\nompageref{2}
  \item [{$\cdot^{T}$}]\begingroup Matrix transpose operator\nomeqref {1.0}\nompageref{2}
  \item [{$\delta_{a,b}$}]\begingroup $\delta_{a,b} = 1$, if $a=b$, $0$ otherwise\nomeqref {1.0}\nompageref{2}
  \item [{$\mathcal{U}\{a,b\}$}]\begingroup Discrete uniform probability distribution with {support $\{a, \hdots, b\}$} with $a,b \in \mathbb{Z}_{\geq}$\nomeqref {1.0}\nompageref{2}
  \item [{$\mathcal{G}(p)$}]\begingroup Geometric probability distribution, with parameter {$p\in(0,1]$}\nomeqref {1.0}\nompageref{2}
  \item [{{$\mathcal{B}(n,p)$}}]\begingroup {Binomial probability distribution, {with parameters} {$p\in[0,1]$} and {$n \in \mathbb{Z}_{>} $}}\nomeqref {1.0}\nompageref{2}

\end{thenomenclature}

\section{Problem formulation}\label{sec.Preliminaries}
We {consider} the following system {of coupled biochemical reactions}
\begin{equation}\label{eq.ChemicalEquationProcess}
\begin{split}%{r@{}l}
  A & \xrightarrow[]{F} A+1 \qquad B \xrightarrow[]{R(A)} B+Q \\
  A & \xrightarrow[]{\gamma_A A} A-1 \quad B \xrightarrow[]{\gamma_B B} B-1
\end{split}
\end{equation}
\noindent where $A$ and $B$ are {two biochemical} species, {with species $A$ affecting the rate of formation of species $B$ via a nonlinear function $R(A)$. This will be modelled as a continuous time jump Markov Process, where its forward Kolmogorov equation, known as the \acf{CME}, is provided in \eqref{eq.ChemicalMasterEquationProcess} below.}

More precisely, random variable $A(t)$ denotes the number of molecules of species $A$ at time $t$ and similarly random variable $B(t)$ denotes the number of molecules of species $B$ at time $t$. The random variable $Q$ denotes the increase in {the number of molecules of species $B$} when a birth takes place.
The parameter $F \in \mathbb{R}_{>}$ is a constant that denotes the rate at which the species $A$ is produced, $R: \mathbb{Z}_{\geq} \rightarrow \mathbb{R}_{\geq}$ {is a function that determines} the rate of production of the species $B$ and constants $\gamma_A \in \mathbb{R}_{>}$ and  $\gamma_B \in \mathbb{R}_{>}$ represent the death rate of each molecule of species $A$ and $B$ respectively.

For any $a,b \in \mathbb{Z}_{\geq}$, we denote by $\mathbb{P}(a,b,t)$ the probability $A(t)=a$, $B(t)=b$. {Random variable $Q$ takes values in $\mathbb{Z}_>$ and we denote by $\mathbb{P}_{Q}(q)$ the probability $Q=q$. In our results random variable $Q$ can have any discrete probability distribution defined on the support $\mathbb{Z}_>$ satisfying {Assumption \ref{assum.finiteMQB} stated below.}}
The \ac{CME}, a version of the Chapman Kolmogorov equation for Markov processes, for system \eqref{eq.ChemicalEquationProcess} is
\begin{equation}
\begin{split}\label{eq.ChemicalMasterEquationProcess}
	\dfrac{\partial \mathbb{P}(a,b,t)}{\partial t}&= \gamma_A [(a+1)\mathbb{P}(a+1,b,t) - a\mathbb{P}(a,b,t)] + F[\mathbb{P}(a-1,b,t) -\mathbb{P}(a,b,t)]\\
  & + \gamma_B[(b+1)\mathbb{P}(a,b+1,t)-b\mathbb{P}(a,b,t)]\\
  &+ R(a)\left[\sum_{q=1}^{b}\mathbb{P}_{Q}(q)\mathbb{P}(a,b-q,t) - \mathbb{P}(a,b,t)\right]
\end{split}
\end{equation}

Mathematical models as in \eqref{eq.ChemicalEquationProcess} are relevant in order to quantify noise levels where a spontaneously formed species $A$ affects the rate of formation of species $B$.
\noindent {{Such models are} of particular interest for studies of various {biological} processes within cells. 
{These appear, for example,} in cellular {signalling} \cite{Heinrich_2002} where external receptors stimulate enzymatic reactions that modulate protein activity in response to external stimuli. Quantifying variance and correlation in this {signaling cascade} {contributes significantly in improving our understanding of} the efficiency of {these signaling} pathways and {the extent of noise amplification in these processes} \cite{Klipp2006}.
The variable $Q$ also captures an effect known as \emph{bursts} that has been observed experimentally in gene transcription \cite{golding2005real} and in the expression of proteins \cite{Paulsson2005}. Such a model has been shown in the literature to follow when {there is a timescale {separation} between various processes, such as when the} degradation rate of the mRNAs is much larger than that of proteins, which is often the case in practice \cite{shahrezaei2008analytical}.}
%\noindent

The problem we address is to find an expression of the mean and variance of $B$ at steady state, expressed in terms of the expected value of the rates of the reactions. This is a non-trivial problem due to the nonlinear rate $R(A)$.

In this manuscript we derive an analytical expression that provides a bound on the variance of $B$, by making use of a discrete expansion of $R(A)$ based on the \emph{Newton} series \cite{MilneThomson1981,Jordan1965} and the eigenfunctions of the \ac{CME}. The bound becomes exact when $R(A)$ is linear. Furthermore, we show via numerical {computations of the variance that the bound} is very close to the true variance even in regimes where $R(A)$ is highly nonlinear, in contrast to the \ac{LNA} for the variance.

It should be noted that the methodology used to derive the bound, differently from the \ac{LNA}, exploits the discrete states of the {system} via the {Newton series}.% }

\noindent The Newton series has also been recently studied in the field of quantum mechanics \cite{Koenig2021} while filtering problems associated with \eqref{eq.ChemicalEquationProcess} have also been studied in the literature. The optimal causal filter for estimating $A$ when $B$ is observed was derived in \cite{Snyder1975}, the average squared estimation error was quantified in \cite{Hinczewski2014}, and a close to optimal version was deployed \emph{in vitro} in \cite{Zechner2016}.

The results that will be presented are associated with the equilibrium distribution of species $A$ and $B$ and we therefore make the following assumption \cite{Gupta2014}.
\begin{assum}\label{assum.Stationaity} System
	\eqref{eq.ChemicalMasterEquationProcess} reaches a {stationary
	distribution} denoted by $\mathbb{P}(a,b)$ \begin{equation} \lim_{t
	\rightarrow+\infty} \mathbb{P}(a,b,t) = \mathbb{P}(a,b) \end{equation}
\end{assum}

	Note that from the \ac{CME} \eqref{eq.ChemicalMasterEquationProcess}
	$\mathbb{P}(a,b)$ satisfies the following equation
	\begin{multline}\label{eq.ChemicalMasterEquationProcessStationary}
\gamma_A [(a+1)\mathbb{P}(a+1,b) -a\mathbb{P}(a,b)] + F[\mathbb{P}(a-1,b)
-\mathbb{P}(a,b)] \\ + \gamma_B[(b+1)\mathbb{P}(a,b+1)-b\mathbb{P}(a,b)]+
R(a)\left[\sum_{q=1}^{b}\mathbb{P}_{Q}(q)\mathbb{P}(a,b-q) -
\mathbb{P}(a,b)\right] = 0
%\end{split}
\end{multline}

For convenience in the notation we denote by $A$ the random variable representing the number of molecules of species $A$ at equilibrium, and its probability distribution is denoted by
$\mathbb{P}_A$. Similarly, we denote by $B$ the random variable representing the number of molecules of species $B$ at equilibrium.
It should also be noted that
\begin{equation}\label{def:A_Poisson}
	A \sim \mathcal{P}(F/\gamma_A)
\end{equation}
\emph{i.e.} random variable $A$ has a Poisson distribution with expected value
$F/\gamma_A$ which is a known result that follows from the fact that $F$ and $\gamma_A$ are constant, see \cite{Kampen2007}.
{
	\begin{remark}\label{rmk.finitesupport}
		We would like to note that when random variable $Q$ {has a finite support} and $\mathbb{E}[R(A)]$ is finite, the validity of Assumption \ref{assum.Stationaity} follows from the ergodicity results in \cite{Cappelletti2021}. In particular,  system \eqref{eq.ChemicalMasterEquationProcess} belongs in this case in the class of reactions for which the  \cite[Theorem 6.1]{Cappelletti2021} holds, and Assumption \ref{assum.Stationaity} follows from this ergodicity result.
\end{remark}}

{{We also assume that random variables $B$ and $Q$ have finite first and second moments as stated below.}}
\begin{assum}\label{assum.finiteMQB}
	{The first and second moments of random variables $B$ and $Q$ are finite, {i.e. $\mathbb{E}[B^2]<\infty $, $\mathbb{E}[Q^2]<\infty$ which implies also $\mathbb{E}[B]< \infty$, $\mathbb{E}[Q]< \infty$. We also assume $\lim_{a\to\infty}a\sum_{b=0}^\infty b^2\mathbb{P}(a,b)<\infty$ which is implied by $\mathbb{E}[AB^2]<\infty$.}}
\end{assum}
%%%%%%%%%%%%%%%%%%%%%%%%%%%%%%%%%%%%%%%%%%%%%%%%%%%%%%%%%%%%%%%%%%%%%%%%%%%%%%%%
\section{Results}\label{sec.Results}
{Before we present our results, we {define the following quantities:}}
\begin{definition}\label{def.s0s1}
{Let $A$ be a random variable with Poisson distribution as specified in \eqref{def:A_Poisson}. The quantities $\sigma_0$ and $\sigma_1$ are defined as {follows}}
\begin{equation}\label{eq.Sigma0Sigma1Statement}
\sigma_0 = \mathbb{E}_{\mathcal{P}}[R(A)], \quad \sigma_1 = - {\sigma_0} + \frac{\mathbb{E}_{\mathcal{P}}[AR(A)]}{{\mathbb{E}_{\mathcal{P}}[A]}}
\end{equation}
\end{definition}
{

{The quantities defined above will be needed in the variance bound {presented} in Proposition \ref{prop.VarianceBoundNonLinearProcess} and the analytical expression for the covariance in Proposition \ref{eq.COVarianceBoundNonLinearProcess}. Their significance is also discussed in Remark \ref{rmk.Varboundcoeff} after Proposition \ref{prop.VarianceBoundNonLinearProcess} is stated.}

It should be noted that the parameters $\sigma_0$, $\sigma_1$ are finite under the assumption that the variance of $B$ is bounded, as follows from the variance bound \eqref{eq.VarianceDownstreamProcessInequality} in Proposition \ref{prop.VarianceBoundNonLinearProcess}. Also in Definition \ref{def.s0s1} $\mathbb{E}_{\mathcal{P}}[\cdot]$ is the expectation with respect to a Poisson distribution with a known mean and can be computed to an {arbitrarily} high precision.

We would also like to note that no assumptions are imposed on function $R(A)$ apart from the fact that this takes values in {$\mathbb{R}_{\geq}$} and leads to a random variable $B$ in \eqref{assum.Stationaity}, \eqref{eq.ChemicalMasterEquationProcessStationary} with finite first two moments.
}

We now give our main result which is an expression that provides a lower bound on the variance of $B$.
\begin{prop}\label{prop.VarianceBoundNonLinearProcess}
Consider the system in \eqref{eq.ChemicalMasterEquationProcessStationary} {%fulfilling
{with Assumption \ref{assum.finiteMQB} satisfied}}. 
{Then the} following inequality holds
  \begin{equation}\label{eq.VarianceDownstreamProcessInequality}
    \begin{split}
    & var(B)>\frac{\sigma_0(\gamma_A+\gamma_B)(\mathbb{E}[Q^2] + \mathbb{E}[Q]) + \sigma_1^22(\mathbb{E}[Q])^2 \mathbb{E}_{\mathcal{P}}[A] }{2\gamma_B(\gamma_B+\gamma_A)}
  \end{split}
  \end{equation}
where $\sigma_0$ and $\sigma_1$ are given in Definition \ref{def.s0s1}.
\end{prop}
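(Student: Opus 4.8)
The plan is to work directly with the stationary \ac{CME} \eqref{eq.ChemicalMasterEquationProcessStationary}, derive moment equations for $B$ by multiplying by appropriate test functions of $(a,b)$ and summing, and then control the nonlinear terms $\mathbb{E}[R(A)B]$, $\mathbb{E}[R(A)]$ using the \emph{Newton series} expansion of $R$ together with the fact that $A$ is Poisson (see \eqref{def:A_Poisson}). First I would multiply \eqref{eq.ChemicalMasterEquationProcessStationary} by $b$ and sum over $a,b$ to obtain the first-moment balance $\gamma_B\mathbb{E}[B] = \mathbb{E}[Q]\,\mathbb{E}[R(A)] = \sigma_0\mathbb{E}[Q]$, using $\sum_q q\,\mathbb{P}_Q(q) = \mathbb{E}[Q]$ and Assumption \ref{assum.finiteMQB} to justify interchanging the sums and discarding boundary terms. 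Then I would multiply by $b^2$ (or by $b(b-1)$, which is cleaner for the bursty jump term) and sum, obtaining a relation of the form $2\gamma_B\mathbb{E}[B^2] - (\text{lower order in }B) = (\mathbb{E}[Q^2]+\mathbb{E}[Q])\mathbb{E}[R(A)] + 2\mathbb{E}[Q]\,\mathbb{E}[R(A)B]$; the limit hypothesis $\lim_{a\to\infty} a\sum_b b^2 \mathbb{P}(a,b) <\infty$ in Assumption \ref{assum.finiteMQB} is exactly what is needed to kill the boundary term coming from the $A$-death part acting on the weight, so I would flag that carefully.

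The crux is then to lower-bound the cross term $\mathbb{E}[R(A)B]$. Here I would expand $R$ in its Newton series about $0$, $R(A) = \sum_{k\ge 0} \frac{\Delta_1^k[R](0)}{k!}(A)_k$, and use that the falling factorials $(A)_k$ are, up to scaling, the eigenfunctions of the generator of the Poisson-distributed $A$-process — this is the spectral idea advertised in the introduction. Truncating at order one gives the linear surrogate $\tilde R(A) = \sigma_0 + \sigma_1(A - \mathbb{E}_{\mathcal{P}}[A])$ (one checks $\mathbb{E}_{\mathcal{P}}[\tilde R(A)] = \sigma_0$ and $\mathbb{E}_{\mathcal{P}}[A\tilde R(A)]/\mathbb{E}_{\mathcal{P}}[A] = \sigma_0 + \sigma_1$, matching Definition \ref{def.s0s1}); the remaining Newton coefficients, applied through the coupling to $B$ and then to $\mathrm{var}(B)$, should be shown to contribute a term of definite (nonnegative) sign, which is what turns the identity into the strict inequality \eqref{eq.VarianceDownstreamProcessInequality}. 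Concretely I would write a $B$-moment equation involving $\mathbb{E}[(A)_k B]$ for each $k$, solve the resulting triangular recursion using $A\sim\mathcal{P}(F/\gamma_A)$ (so $\mathbb{E}[(A)_k] = (F/\gamma_A)^k$ and the $A$-dynamics act diagonally on $(A)_k$ with eigenvalue $-k\gamma_A$), and assemble $\mathrm{var}(B)$; the linear part reproduces the right-hand side of \eqref{eq.VarianceDownstreamProcessInequality} exactly (hence exactness for linear $R$), and the higher-order Newton terms enter $\mathrm{var}(B)$ as a sum of squares weighted by positive coefficients $\propto 1/(\gamma_B(\gamma_B + k\gamma_A))$, giving the strictness.

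I expect the main obstacle to be twofold: (i) justifying all the interchanges of infinite sums and the vanishing of boundary terms under only the moment hypotheses of Assumption \ref{assum.finiteMQB} — in particular that $\mathbb{E}[R(A)B]$ and $\mathbb{E}[R(A)B^2]$-type quantities are finite and that the Newton series of $R$ converges in the relevant weighted $\ell^1$ sense against the Poisson weights (this is presumably where the ``eigenfunctions form a basis for the Newton expansion'' lemma from the body of the paper is invoked); and (ii) showing the higher-order Newton remainder contributes with the correct sign to $\mathrm{var}(B)$ rather than to, say, a cross term that could be negative — i.e. that after projecting everything onto the variance the off-diagonal contributions cancel by orthogonality of the $(A)_k$ under the Poisson measure, leaving only a manifestly nonnegative diagonal sum. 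Once those two points are in place, the bound follows by simply dropping the nonnegative higher-order sum, and equality in the linear case is immediate since that sum is then empty.
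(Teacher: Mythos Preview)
Your overall strategy matches the paper's closely: derive first- and second-moment balances from the stationary CME (the paper does this via the generating functions $T_a(z)=\sum_b z^b\mathbb{P}(a,b)$, but that is equivalent to your multiply-and-sum approach), isolate the cross term $\mathbb{E}[R(A)B]$, expand $R$ in a Newton-type series, and exploit a spectral decomposition together with orthogonality under the Poisson law to produce a sum of nonnegative terms that can be truncated at order one.

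There is, however, a concrete misidentification in your sketch that would block step (ii). The falling factorials $(A)_k$ are \emph{not} eigenfunctions of the $A$-generator and are \emph{not} orthogonal under the Poisson measure. The operator $\mathbb{S}$ that arises (from writing $T'_a(1)=T_a(1)G(a)/\gamma_B$ and substituting) satisfies $\mathbb{S}[(a)_k]=-\tfrac{k\gamma_A}{\gamma_B}\big[(a)_k-\mathbb{E}_{\mathcal{P}}[A](a)_{k-1}\big]$, i.e.\ the action is only upper-triangular in the falling-factorial basis. The genuine eigenfunctions are the Poisson--Charlier polynomials $\psi_n(A)=\sum_{k=0}^{n}\binom{n}{k}(-\mathbb{E}_{\mathcal{P}}[A])^k(A)_{n-k}$, with eigenvalues $-n\gamma_A/\gamma_B$, and it is these that satisfy $\mathbb{E}_{\mathcal{P}}[\psi_n\psi_{n'}]=n!\,(\mathbb{E}_{\mathcal{P}}[A])^n\delta_{n,n'}$. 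The paper rewrites the Newton series in this basis, $R(A)=\sum_n\sigma_n\psi_n(A)$ (this is the reformulation alluded to in the introduction), solves $(\mathbb{I}-\mathbb{S})G=M'(1)R$ eigenfunction-by-eigenfunction to get $T'_a(1)$, and only then does $\mathbb{E}[R(A)B]$ collapse via orthogonality to the diagonal sum $\sum_n \sigma_n^2\,\tfrac{\gamma_B n!(\mathbb{E}_{\mathcal{P}}[A])^n}{n\gamma_A+\gamma_B}$. Your proposed route through a triangular recursion in $\mathbb{E}[(A)_kB]$ is in principle equivalent, but your appeal to ``orthogonality of the $(A)_k$ under the Poisson measure'' to kill off-diagonal terms is false as stated; you would be forced to perform exactly the change of basis to $\psi_n$ to see the sum-of-squares structure.
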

\begin{proof}
	See Section \ref{pf.VarianceBoundNonLinearProcess}.
\end{proof}
\begin{remark}\label{rmk.Varboundcoeff}
{{It is shown in the derivation of Proposition \ref{prop.VarianceBoundNonLinearProcess}}
that $\sigma_0$ and $\sigma_1$ %appearing
{in} \eqref{eq.VarianceDownstreamProcessInequality} are the first two coefficients of an appropriate %re
{formulation} of the Newton series {expansion} of $R(A)$. {In particular,} %, \emph{i.e.}
$R(A) = \sum_{n=0}^{\infty}\sigma_n\psi_n(A)$ where the functions $\psi_n(A)$, {defined in Proposition~\ref{prop.SeriesExpansion}, are a variant of the Poisson-Charlier polynomials \cite{ozmen2015}}.} 
\end{remark}
{
\begin{remark}
As follows from Proposition \ref{prop.VarianceBoundNonLinearProcess}, $\sigma_0$ and $\sigma_1$, \emph{i.e.} the first two coefficient{s} in the Newton series expansion of $R(A)$ lead to a lower bound for the variance of $B$, as this is quantified in the Proposition.
This is in contrast to \ac{LNA} approaches whereby $R(A)$ is approximated with the first two terms of its Taylor series expansion. The latter leads only {to} an approximate expression for the variance that is not a bound. Furthermore, numerical %examples
{computations}
illustrate that the bound in Proposition \ref{prop.VarianceBoundNonLinearProcess} is also very close to the actual variance even when $R(A)$ is highly nonlinear, a regime where \ac{LNA} approaches can be very inaccurate. Such examples will be provided in Section~\ref{sec.Examples}.
\end{remark}}
\begin{remark}\label{rmk.Prop3LineaEquality}
If $R(A)$ is a linear function then equation \eqref{eq.VarianceDownstreamProcessInequality} holds with equality. This is stated in Proposition \ref{prop.VarianceBoundLinearProcess} {below}.
\end{remark}

\begin{prop}\label{prop.VarianceBoundLinearProcess}
	Consider the system in \eqref{eq.ChemicalMasterEquationProcessStationary} {%fulfilling
{with Assumption \ref{assum.finiteMQB} satisfied}}. If $R(A)=R_cA$, {were $R_c \in \mathbb{R_>}$ is a constant,} {then} the bound for the variance in Proposition \ref{prop.VarianceBoundNonLinearProcess} holds with equality and is given by
    \begin{equation}\label{eq.EqualityVarianceLinearProcess}
      var(B) =\frac{R_c\mathbb{E}_{\mathcal{P}}[A](\gamma_A + \gamma_B)(\mathbb{E}[Q^2] + \mathbb{E}[Q])+R_c^2\mathbb{E}_{\mathcal{P}}[A]2(\mathbb{E}[Q])^2 }{2\gamma_B(\gamma_A+\gamma_B)}
    \end{equation}
    \end{prop}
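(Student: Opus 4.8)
The plan is to deduce \eqref{eq.EqualityVarianceLinearProcess} from Proposition~\ref{prop.VarianceBoundNonLinearProcess} itself, by checking that for an affine rate $R(A)=R_cA$ the single inequality used to establish \eqref{eq.VarianceDownstreamProcessInequality} becomes an equality, and then by evaluating $\sigma_0$ and $\sigma_1$ in closed form. The starting observation is that an affine function of $A$ has a terminating Newton series: in the sense of Remark~\ref{rmk.Varboundcoeff}, $R(A)=\sum_{n\ge 0}\sigma_n\psi_n(A)$ with the $\psi_n$ the Poisson-Charlier-type polynomials of Proposition~\ref{prop.SeriesExpansion}, where $\psi_n$ has degree $n$; since $R$ has degree one we get $\sigma_n=0$ for every $n\ge 2$, so only the $n=0$ and $n=1$ terms of the expansion survive.

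First I would return to the derivation of Proposition~\ref{prop.VarianceBoundNonLinearProcess}, where $var(B)$ is expressed as an exact series in the Newton coefficients $\sigma_n$ together with the spectral data of the master equation, and where \eqref{eq.VarianceDownstreamProcessInequality} is obtained by retaining the $n=0$ and $n=1$ contributions and discarding the remaining ones, which are non-negative. Because $\sigma_n=0$ for $n\ge 2$ when $R(A)=R_cA$, that discarded part is identically zero, so \eqref{eq.VarianceDownstreamProcessInequality} holds with equality. An alternative self-contained route is available and worth carrying as a cross-check: with $R$ affine all propensities in \eqref{eq.ChemicalMasterEquationProcessStationary} are affine in $(A,B)$, so the stationary moment equations for $\mathbb{E}[B]$, $\mathbb{E}[AB]$ and $\mathbb{E}[B^2]$ close; solving them in turn, using $A\sim\mathcal{P}(F/\gamma_A)$ for the $A$-moments, gives $var(B)=\mathbb{E}[B^2]-\mathbb{E}[B]^2$ directly and should reproduce the same expression.

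It then remains to evaluate $\sigma_0$ and $\sigma_1$ of Definition~\ref{def.s0s1} for $R(A)=R_cA$. Writing $\lambda:=\mathbb{E}_{\mathcal{P}}[A]=F/\gamma_A$ and using the Poisson moments $\mathbb{E}_{\mathcal{P}}[A]=\lambda$ and $\mathbb{E}_{\mathcal{P}}[A^2]=\lambda^2+\lambda$, one obtains $\sigma_0=R_c\lambda$ and $\sigma_1=-R_c\lambda+R_c(\lambda^2+\lambda)/\lambda=R_c$. Substituting $\sigma_0=R_c\mathbb{E}_{\mathcal{P}}[A]$ and $\sigma_1=R_c$ into \eqref{eq.VarianceDownstreamProcessInequality}, now with equality, yields exactly \eqref{eq.EqualityVarianceLinearProcess}.

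The main obstacle is the bookkeeping in the first step: one must verify that the proof of Proposition~\ref{prop.VarianceBoundNonLinearProcess} introduces no other lossy estimate, in particular that any truncation or convergence argument for the Newton series, and any handling of the boundary terms of the master equation, are exact identities rather than inequalities once the expansion is finite, so that annihilating the coefficients with $n\ge 2$ genuinely forces equality. The closed moment computation described above provides an independent confirmation, at the price of not exhibiting \eqref{eq.EqualityVarianceLinearProcess} directly as the equality case of the general bound.
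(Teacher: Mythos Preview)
Your argument is correct, and your primary route is in fact cleaner than the paper's. Both you and the paper begin by noting that $R(A)=R_cA$ gives $\sigma_n=0$ for all $n\ge 2$, and both compute $\sigma_0=R_c\mathbb{E}_{\mathcal{P}}[A]$ and $\sigma_1=R_c$. From there, however, the paper does not argue directly that the discarded tail of the series \eqref{eq.VarianceBoundNonLinearProcessS12} vanishes; instead it computes $var(B)$ independently by solving the Lyapunov equation $P\Sigma+\Sigma P^T+D=0$ for the linear system and checks that $\Sigma_{2,2}$ matches the bound expression. Your approach---observing that the sole inequality in the proof of Proposition~\ref{prop.VarianceBoundNonLinearProcess} is the truncation of \eqref{eq.VarianceBoundNonLinearProcessS12} at $n=1$, and that this truncation loses nothing when $\sigma_n=0$ for $n\ge 2$---exhibits \eqref{eq.EqualityVarianceLinearProcess} directly as the equality case of the general bound, which is conceptually tighter. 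Your concern about hidden lossy steps is well placed but unfounded here: inspection of the derivation in Section~\ref{pf.VarianceBoundNonLinearProcess} confirms that every manipulation up to \eqref{eq.VarianceBoundNonLinearProcessS12} is an identity. The cross-check you propose via closed moment equations is essentially the Lyapunov computation the paper uses as its main device, so you have both routes covered.
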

\begin{proof}
See Section \ref{pf.VarianceBoundLinearProcess}.
\end{proof}
\begin{remark}
{
{The fact that the bound in \eqref{eq.VarianceDownstreamProcessInequality} is equal in this case to the expression in \eqref{eq.EqualityVarianceLinearProcess} can be shown  by} explicitly computing $\sigma_0$ and $\sigma_1$ in Definition \ref{def.s0s1} and using them in \eqref{eq.VarianceDownstreamProcessInequality} with $R(A)= R_cA$. {The equality of this expression with the variance of $B$ is obtained by deriving
an exact expression for the variance analytically} using \ac{LNA} approaches, see \emph{e.g.} \cite{Kampen2007} and \cite{Lestas2008}.}
\end{remark}

Finally, we give a result on the covariance of $A$ and $B$.
\begin{prop}\label{prop.COVarianceBoundNonLinearProcess}
Consider the {system} in \eqref{eq.ChemicalMasterEquationProcessStationary} {%fulfilling
{with Assumption \ref{assum.finiteMQB} satisfied}}. Then $cov(A,B) = cov(B,A)$ satisfies
  \begin{equation}\label{eq.COVarianceBoundNonLinearProcess}
    cov(A,B) = \mathbb{E}[Q]\frac{\sigma_1\mathbb{E}_{\mathcal{P}}[A]}{\gamma_A + \gamma_B}
  \end{equation}
  where $\sigma_1$ is given in \eqref{eq.Sigma0Sigma1Statement}.
\end{prop}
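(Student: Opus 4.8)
The plan is to derive a closed moment equation for $\mathbb{E}[AB]$ directly from the stationary master equation \eqref{eq.ChemicalMasterEquationProcessStationary} and combine it with the already-known marginal moments of $A$ (Poisson, hence $\mathbb{E}[A] = \mathbb{E}_{\mathcal{P}}[A] = F/\gamma_A$, $var(A) = F/\gamma_A$) and with the stationary equation for $\mathbb{E}[B]$. Concretely, I would multiply \eqref{eq.ChemicalMasterEquationProcessStationary} by $ab$ and sum over all $a,b \in \mathbb{Z}_{\geq}$; the finiteness assumptions in Assumption \ref{assum.finiteMQB}, in particular $\lim_{a\to\infty} a\sum_b b^2 \mathbb{P}(a,b) < \infty$, are exactly what is needed to justify the interchange of summation and the vanishing of boundary terms when reindexing sums like $\sum_{a,b} ab\,\mathbb{P}(a\pm 1, b, t)$ and $\sum_{a,b} ab\,\mathbb{P}(a,b\mp q,t)$. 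Carrying out the shifts, the $\gamma_A$-term contributes $-\gamma_A \mathbb{E}[AB] + F\,\mathbb{E}[B]$ (using $\sum_a a(a+1)\mathbb{P}(a+1,b) = \sum_a a(a-1)\mathbb{P}(a,b)$ and $\sum_a a \mathbb{P}(a-1,b) = \sum_a (a+1)\mathbb{P}(a,b)$), the $\gamma_B$-term contributes $-\gamma_B \mathbb{E}[AB]$, and the birth term contributes $\mathbb{E}[Q]\,\mathbb{E}[A R(A)]$, since $\sum_q q\,\mathbb{P}_Q(q) = \mathbb{E}[Q]$ and $\sum_b b\sum_{q}\mathbb{P}_Q(q)\mathbb{P}(a,b-q) = \sum_b (b+\bar q\text{-shift}) \cdots$ collapses to $\mathbb{E}[Q] R(a) + R(a)\sum_b b\,\mathbb{P}(a,b)$ after summing, with the latter piece cancelling against the $-R(a)\mathbb{E}[\cdot]$ term in the same bracket. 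The upshot is a stationary balance of the form
\begin{equation}\label{eq.covproof.AB}
  (\gamma_A + \gamma_B)\,\mathbb{E}[AB] = F\,\mathbb{E}[B] + \mathbb{E}[Q]\,\mathbb{E}[A R(A)].
\end{equation}

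Next I would obtain $\mathbb{E}[B]$ by the same procedure with weight $b$ instead of $ab$: summing \eqref{eq.ChemicalMasterEquationProcessStationary} against $b$ kills the $\gamma_A$ and $F$ terms (they telescope to zero since they do not touch the $b$ index) and yields $-\gamma_B \mathbb{E}[B] + \mathbb{E}[Q]\,\mathbb{E}[R(A)] = 0$, i.e. $\mathbb{E}[B] = \mathbb{E}[Q]\,\mathbb{E}[R(A)]/\gamma_B = \mathbb{E}[Q]\,\sigma_0/\gamma_B$. Substituting this into \eqref{eq.covproof.AB} and using $F = \gamma_A \mathbb{E}_{\mathcal{P}}[A]$ gives $\mathbb{E}[AB]$ entirely in terms of $\sigma_0$, $\mathbb{E}[Q]$, $\mathbb{E}[A R(A)]$ and $\mathbb{E}_{\mathcal{P}}[A]$. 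Then $cov(A,B) = \mathbb{E}[AB] - \mathbb{E}[A]\mathbb{E}[B] = \mathbb{E}[AB] - \mathbb{E}_{\mathcal{P}}[A]\,\mathbb{E}[Q]\,\sigma_0/\gamma_B$, and the plan is to check that the $\sigma_0$-proportional pieces cancel, leaving
\[
  cov(A,B) = \frac{\mathbb{E}[Q]}{\gamma_A+\gamma_B}\left( \mathbb{E}_{\mathcal{P}}[A R(A)] - \sigma_0\,\mathbb{E}_{\mathcal{P}}[A] \right) = \mathbb{E}[Q]\,\frac{\sigma_1 \mathbb{E}_{\mathcal{P}}[A]}{\gamma_A+\gamma_B},
\]
the last equality being precisely the definition of $\sigma_1$ in \eqref{eq.Sigma0Sigma1Statement} (note $\mathbb{E}[A R(A)] = \mathbb{E}_{\mathcal{P}}[A R(A)]$ and $\mathbb{E}[R(A)] = \mathbb{E}_{\mathcal{P}}[R(A)] = \sigma_0$ because $A$ is Poisson). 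The symmetry $cov(A,B) = cov(B,A)$ is immediate from the definition of covariance.

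The main obstacle I anticipate is entirely technical rather than conceptual: rigorously justifying that all the infinite sums converge and that the reindexing shifts produce no leftover boundary contributions. This is where Assumption \ref{assum.finiteMQB} does the work — $\mathbb{E}[AB^2] < \infty$ (equivalently the stated limit condition) controls the tail of $\sum_{a,b} ab\,\mathbb{P}(a,b)$ and of the shifted versions appearing in the $\gamma_B$ and burst terms, while the Poisson structure of $A$ gives all moments of $A$ needed for the $\gamma_A$ and $F$ terms. A clean way to sidestep delicate interchange arguments is to first sum over $b$ up to a truncation $N$, identify the telescoping cancellations at finite $N$, and then let $N\to\infty$ using dominated convergence with the dominating summable sequence supplied by Assumption \ref{assum.finiteMQB}; I would present the computation at that level of rigor and then collect terms to reach \eqref{eq.covproof.AB}.
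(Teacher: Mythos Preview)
Your proposal is correct and reaches the stated formula, but it takes a genuinely different route from the paper's proof. The paper does not derive a moment equation for $\mathbb{E}[AB]$ directly from the stationary CME; instead it reuses the spectral machinery built for Proposition~\ref{prop.VarianceBoundNonLinearProcess}. Specifically, the paper writes $\mathbb{E}[AB]=\sum_a a\,T'_a(1)$ and substitutes the eigenfunction expansion \eqref{eq.VarianceBoundNonLinearProcessS7} for $T'_a(1)$, then uses the decomposition $A=\mathbb{E}_{\mathcal{P}}[A]\psi_0(A)+\psi_1(A)$ together with the orthogonality relation $\mathbb{E}_{\mathcal{P}}[\psi_n(A)\psi_{n'}(A)]=n!(\mathbb{E}_{\mathcal{P}}[A])^n\delta_{n,n'}$ from Proposition~\ref{prop.MomentsOfPsi} to collapse the infinite sum to the $n=0,1$ terms, which yields the result after subtracting $\mathbb{E}[A]\mathbb{E}[B]$.

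Your approach is more elementary and entirely self-contained: multiplying the stationary equation by $ab$ and summing is the standard generator-based moment computation, and it bypasses the Newton series, the eigenfunctions $\psi_n$, and their orthogonality altogether. The identity you arrive at, $(\gamma_A+\gamma_B)\mathbb{E}[AB]=F\,\mathbb{E}[B]+\mathbb{E}[Q]\,\mathbb{E}_{\mathcal{P}}[AR(A)]$, is exactly what the paper's spectral calculation encodes, but you obtain it in one step rather than via the $T'_a(1)$ expansion. The trade-off is that the paper's route demonstrates how the covariance fits into the same eigenfunction framework that drives the variance bound (so the two propositions share a common engine), whereas your route shows that the covariance result is in fact much shallower than the variance bound and needs none of that structure. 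One small presentational point: what you call ``the $\gamma_A$-term'' in your sketch is really the combined $\gamma_A$ and $F$ contributions; the algebra is right, just label them separately when you write it up.
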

\begin{proof}
See Section \ref{pf.COVarianceBoundNonLinearProcess}.
\end{proof}
\begin{remark}
Note that Proposition \ref{prop.COVarianceBoundNonLinearProcess} holds with equality also when $R(A)$ is nonlinear.
\end{remark}
\begin{remark}
{An immediate consequence of Proposition \ref{prop.COVarianceBoundNonLinearProcess} and Proposition \ref{prop.VarianceBoundNonLinearProcess} {is a bound for the  correlation coefficient $\rho_{AB}\in [-1,1]$ of $A$ and $B$ as stated below}}
\begin{equation}\label{eq.corrcoeff}
\begin{split}
	\rho_{AB}&= \frac{cov(A,B)}{\sqrt{var(A)}\sqrt{var(B)}}\\
&\geq\frac{\mathbb{E}[Q]\mathbb{E}_{\mathcal{P}}[A]\sigma_1}{(\gamma_A+\gamma_B)\sqrt{\mathbb{E}_{\mathcal{P}}[A]}\sqrt{\frac{\sigma_0(\gamma_A+\gamma_B)(\mathbb{E}[Q^2] + \mathbb{E}[Q]) + \sigma_1^22(\mathbb{E}[Q])^2 \mathbb{E}_{\mathcal{P}}[A] }{2\gamma_B(\gamma_A+\gamma_B)}}}
\end{split}
\end{equation}
\end{remark}

The results stated above have a number of useful properties relative to other more conventional approaches for quantifying the variance {by means of analytical expressions involving the reaction rates}. As mentioned in Section \ref{sec.Introduction}, Proposition \ref{prop.VarianceBoundNonLinearProcess} provides a bound for the variance when {the} transition rate $R(a)$ is nonlinear whereas existing methods {like \ac{LNA} {approaches}} give only approximate values. As will be shown in Section \ref{sec.Examples} the variance calculation through the \ac{LNA} is not as close to the variance of $B$
as is the bound provided in Proposition \ref{prop.VarianceBoundNonLinearProcess}. This is especially important in the case of highly nonlinear propensity functions where \acp{LNA} become less accurate. {Furthermore, the numerical investigation in Section \ref{sec.Examples} demonstrates that the bound maintains its accuracy despite the presence of bursts.}

{The methodology used to derive these results is also of independent interest, based on the use of the {\em Newton series} expansion and an appropriate exploitation of {\em spectral properties} of the master equation.}

%%%%%%%%%%%%%%%%%%%%%%%%%%%%%%%%%%%%%%%%%%%%%%%%%%%%%%%%%%%%%%%%%%%%%%%%%%%%%%%%
\section{Proofs}\label{sec.MYAppendix}
{{In this section we} %We here
provide {the proofs} of the results presented in the previous {section.}}
{To facilitate readability, we give an overview first of the corresponding derivations. The detailed derivations are given in the {subsections} that follow}.\\

{{\em Overview of proof of Proposition \ref{prop.VarianceBoundNonLinearProcess}.} The proof is provided in Section \ref{pf.VarianceBoundNonLinearProcess} and makes use of an intermediate result derived in Section \ref{prop.AppendSeriesExpansion} and auxiliary algebraic results derived in Section \ref{prop.AppendixMomentsOfPsi}.}

{In particular, the proof starts by defining the Probability Generating Functions $T_a(z)$ and $M(z)$ in \eqref{eq.pgfs} associated with $\mathbb{P}(a,b)$ and $\mathbb{P}_Q(q)$, respectively;} we then express the first two moments of $B$ as a function of those and their derivatives $T'_a(1)$ and $M'(1)$.

{We then derive an expression {for} %the unknown
$T'_a(1)$ in terms %of a subset
of the eigenfunctions $\psi_n$ and {the corresponding} eigenvalues of the considered \ac{CME}.
{We show} that {$T'_a(1)$} {can be expressed by means of} a reformulation of the \emph{Newton series} {expansion} of $R(A)$ {on a basis provided by} the eigenfunctions $\psi_n$. {This therefore yields} a connection between the variance of $B$ and the \emph{Newton series} expansion of $R(A)$.}

{We then proceed to show that the resulting expression {for} the variance of $B$ in terms of {$T'_a(1)$} is the sum of non-negative {terms involving the coefficients} $\sigma_n$ appearing in the \emph{Newton} series {expansion} of $R(A)$. {By
{retaining the first two terms} we deduce a bound for $var(B)$ in terms of $\sigma_0$ and $\sigma_1$, {as stated in Proposition \ref{prop.VarianceBoundNonLinearProcess}.}}}\\

{{\em Overview of proof of Proposition \ref{prop.VarianceBoundLinearProcess}.} The proof is provided in Section \ref{pf.VarianceBoundLinearProcess}.} {
We {derive first  an expression for the bound in \eqref{eq.VarianceDownstreamProcessInequality} when the $R(A)$ is a linear function by substituting in \eqref{eq.VarianceDownstreamProcessInequality} the form of $R(A)$ under consideration.
We then make use of {known results for the moments of the \ac{CME} when the rates are linear to derive an expression for the actual variance of $B$. The latter coincides with the expression for the variance bound previously derived, thus showing that the variance bound in Proposition \ref{prop.VarianceBoundNonLinearProcess} is exact in this case.}}}\\

{{\em Overview of proof of Proposition \ref{prop.COVarianceBoundNonLinearProcess}.} The proof is provided in Section \ref{pf.COVarianceBoundNonLinearProcess}.}
{We show {that
$cov(A,B)$}
{can also be represented as a}
function of {the quantity $T'_a(1)$ that was used in %derived for
the proof of Proposition \ref{prop.VarianceBoundNonLinearProcess}.}
By making {use of results derived in the proof} of Proposition \ref{prop.MomentsOfPsi} and Proposition \ref{prop.sigmas} we express the considered quantity as a function of the coefficient $\sigma_1$.}\\

{In Section \ref{prop.AppendixMomentsOfPsi} we { state and prove two auxiliary results that are used in the derivations and lead also to the quantities introduced in} Definition \ref{def.s0s1}. We first {present} Proposition \ref{prop.MomentsOfPsi} an intermediate result on the moments of $\psi_n(a)$. In Proposition \ref{prop.sigmas}, {we use Proposition \ref{prop.MomentsOfPsi} to show} that the coefficients $\sigma_n$ can  {be expressed as an expectation with respect to a Poisson distribution.}}

\subsection{Proof of Proposition \ref{prop.VarianceBoundNonLinearProcess}}\label{pf.VarianceBoundNonLinearProcess}
\begin{proof}[Proof of Proposition \ref{prop.VarianceBoundNonLinearProcess}]
{Consider the probability generating functions defined below, associated with $B$ and $Q$, respectively}
\begin{equation}\label{eq.pgfs}
T_a(z){:=}\sum_{b=0}^{\infty}z^b\mathbb{P}(a,b), \quad M(z){:=}\sum_{q=0}^{\infty}z^q\mathbb{P}_{Q}(q)
\end{equation}
{{with $\text{dom}(T_a) =  \text{dom}(M) {\subseteq} \{ z \in {\mathbb{R}_\geq} %\mathbb{C}
\mid |z| \leq 1 \}$.}} We consider \eqref{eq.ChemicalMasterEquationProcessStationary} and under the transformations in \cref{eq.pgfs} we {deduce} %compute
the following equation
\begin{equation}\label{eq.ExpansionProofS1Transformation}
\begin{split}
  \gamma_A [(a+1)T_{a+1}(z) - aT_a(z)] + F[T_{a-1}(z) - T_a(z)] \\
  + \gamma_B(1-z)T'_{a}(z) + R(a)(M(z)-1)T_a(z)= 0
\end{split}
\end{equation}
{Note that by setting} $z=1$ we {have} %can compute the
$T_{a}(1) = \mathbb{P}_A(a)$, where $A$ in the considered case has a Poisson distribution {(see \cite{Kampen2007}) as discussed in the main text.} Secondly, we {can deduce} %moments of %compute
moments of $B$ given that $A=a$ by differentiating \eqref{eq.ExpansionProofS1Transformation} with respect to $z$ and setting $z=1$.
In particular, we have
\begin{equation}\label{eq.HPrimeAndSecond}
 {T'_a(1) = \sum_{b=0}^{\infty}b\mathbb{P}(a,b), \quad T''_a(1)} = \sum_{b=0}^{\infty}b(b-1)\mathbb{P}(a,b)
\end{equation}
Hence by differentiating \eqref{eq.ExpansionProofS1Transformation} with respect to $z$ and then setting $z=1$ we obtain {the following {equation} involving the first moment}% \eqref{eq.DiscussionCMEOpertorS2}
\begin{equation}\label{eq.DiscussionCMEOpertorS2}
\begin{split}
  &\gamma_A [(a+1)T'_{a+1}(1) - aT'_a(1)] + F[T'_{a-1}(1) - T'_a(1)] \\
  &- \gamma_BT'_{a}(1) + R(a)(M'(1)T_a(1) )= 0
\end{split}
\end{equation}
where $M'(1) = \mathbb{E}[Q]$.

For the second moment we {have} %\eqref{eq.DiscussionCMEOpertorS3}
{\begin{equation}\label{eq.DiscussionCMEOpertorS3}
\begin{split}
  &\gamma_A [(a+1)T''_{a+1}(1) - aT''_a(1)] + F[T''_{a-1}(1) - T''_a(1)] \\
  &- 2\gamma_B T''_{a}(1) + R(a)(M''(1)T_a(1)+2M'(1)T'_a(1))= 0
\end{split}
\end{equation}}
where $M''(1) = \mathbb{E}[Q^2]-\mathbb{E}[Q]$.

{
{By summing \eqref{eq.DiscussionCMEOpertorS2} over all $a$  and using the property $\lim_{a\to\infty}a\sum_{b=0}^\infty b\mathbb{P}(a,b)<\infty$, which follows from Assumption \ref{assum.finiteMQB}, we get}} %yields
\begin{equation}\label{eq.VarianceBoundNonLinearProcessS1}
\sum_{a=0}^{\infty} T'_a(1) = \frac{M'(1)}{\gamma_B} \sum_{a=0}^{\infty} R(a)T_a(1)
\end{equation}
Note that the {left} hand side of \eqref{eq.VarianceBoundNonLinearProcessS1} is {the expectation} of $B$ {and the right hand side {is a scaled version (by a factor of $M'(1)/\gamma_B$) of the expectation of $R(A)$}} with respect to the known {distribution of $A$ since $T_a(1) = \mathbb{P}_A(a)$}.
{Hence %from the definition of {$T_a'(1)$,}
{\eqref{eq.VarianceBoundNonLinearProcessS1} can be written as}}
\begin{equation}\label{eq.VarianceBoundNonLinearProcessS2}
      \mathbb{E}[B] = \frac{\mathbb{E}[Q]}{\gamma_B}\mathbb{E}_{\mathcal{P}}[R(A)]
\end{equation}
For the second moment of $B$
{we proceed analogously with the first moment by summing \eqref{eq.DiscussionCMEOpertorS3} over all {$a$} and using the property $\lim_{a\to\infty}a\sum_{b=0}^\infty b^2\mathbb{P}(a,b)<\infty$, which follows from Assumption \ref{assum.finiteMQB}}, and obtain %\eqref{eq.VarianceBoundNonLinearProcessS3}
{\begin{equation}\label{eq.VarianceBoundNonLinearProcessS3}
      \sum_{a=0}^{\infty} T''_a(1) = \frac{1}{2\gamma_B}\left(M''(1)\sum_{a=0}^{\infty} R(a)T_a(1) + 2M'(1)\sum_{a=0}^{\infty} R(a)T'_a(1)\right)
\end{equation}}
{From \eqref{eq.HPrimeAndSecond} and \eqref{eq.VarianceBoundNonLinearProcessS3}} we obtain
\begin{equation}\label{eq.VarianceBoundNonLinearProcessS4}
      \mathbb{E}[B^2]-\mathbb{E}[B]  =  \frac{1}{2\gamma_B}\left(M''(1)\mathbb{E}_{\mathcal{P}}[R(A)]+2M'(1)\mathbb{E}[BR(A)]\right)
\end{equation}
where for the last term on the right hand side of \eqref{eq.VarianceBoundNonLinearProcessS3} we have applied the definition of {$T'_a(1)$} {in \eqref{eq.HPrimeAndSecond}}.\\

We now seek in the remainder {of the proof} to {derive an expression}
{for the last term in \eqref{eq.VarianceBoundNonLinearProcessS4}.}

\noindent Consider the following representation of $T'_a(1)${
\begin{equation}\label{eq.DiscussionCMEOpertorS4}
  T'_a(1) = \frac{T_a(1)G(a)}{\gamma_B}
\end{equation}}
{We seek to determine a representation of the function $G(a)$ \eqref{eq.DiscussionCMEOpertorS4}} {in terms of the known probability distribution of $A$ and the moments of $Q$}. {We} {substitute} \eqref{eq.DiscussionCMEOpertorS4} into \eqref{eq.DiscussionCMEOpertorS2} and obtain
\begin{multline}\label{eq.DiscussionCMEOpertorSextrStep1}
\frac{\gamma_A}{\gamma_B}[(a+1)T_{a+1}(1)G(a+1) - {aT_{a}(1)G(a)]}\\
+\frac{F}{\gamma_B}[T_{a-1}(1)G(a-1)- T_{{a}}(1){G(a)]}\\
-{G(a)T_a(1)+M'(1)R(a)T_a(1)}=0
\end{multline}
Note that from the definition of {the} Poisson distribution we have
\begin{equation}\label{eq.DiscussionCMEOpertorSextrStep2}
T_{a+1}(1) = \frac{F}{\gamma_A(a+1)}T_a(1), \qquad T_{a-1}(1) = \frac{a\gamma_A}{F}T_a(1)
\end{equation}
By {rearranging} \eqref{eq.DiscussionCMEOpertorSextrStep1} and using \eqref{eq.DiscussionCMEOpertorSextrStep2} leads to
\begin{equation}\label{eq.DiscussionCMEOpertorS5}
T_a(1)[(\mathbb{S}-\mathbb{I})G(a)+M'(1)R(a)]=0
\end{equation}
where $\mathbb{S}$, defined in \eqref{eq.OperatorS}, is an operator acting upon $G(a)$ {and is given by}
\begin{equation}\label{eq.OperatorS}
  \mathbb{S}=\gamma_B^{-1}(\gamma_Aa\Delta_{-1} +F \Delta_1) = \frac{\gamma_A}{\gamma_B}(a\Delta_{-1} +\mathbb{E}_{\mathcal{P}}[A] \Delta_1)
\end{equation}
By making use of \eqref{eq.DiscussionCMEOpertorS5} we find that {$G(a)$ satisfies} %\eqref{eq.VarianceBoundNonLinearProcessS5}
    \begin{equation}\label{eq.VarianceBoundNonLinearProcessS5}
        (\mathbb{I}-\mathbb{S})G(a) = M'(1)R(a)
    \end{equation}
where $\mathbb{I}$ is the identity operator.

{We now seek to exploit spectral properties of $\mathbb{S}$ in order to derive an expression for $G(a)$ and hence of $T'_a(1)$. In particular we will derive an expression for $G(a)$ in terms of the eigenfunctions and eigenvalues of operator $\mathbb{S}$. We {derive those below}, {following the approach in \cite{Roman1984}}.}

\noindent Let us note that $\Delta_1[(a)_k] = k(a)_{k-1}$ and $a\Delta_{-1}[(a)_k] = -k(a)_k$ {and hence} we can write $\mathbb{S}[(a)_k]$ {as} %in \eqref{eq.DiscussionCMEOpertorS7}.
\begin{equation}\label{eq.DiscussionCMEOpertorS7}
  \mathbb{S}[(a)_k] = -\frac{k\gamma_A}{\gamma_B}[(a)_k-\mathbb{E}_{\mathcal{P}}[A](a)_{k-1} ]
\end{equation}

To facilitate the analysis of \eqref{eq.DiscussionCMEOpertorS7} we introduce {an operator $\mathbb{F}$ that is acting on
{polynomials in}
variable $a$, with real coefficients, with $a$ taking values in $\mathbb{Z}_{\geq}$. In particular, operator $\mathbb{F}$ is defined as follows. For a monomial $a^k$, $a,k\in\mathbb{Z}_{\geq}$ we have} %it satisfies the property
 $\mathbb{F}[a^k]=(a)_k$. {Furthermore, $\mathbb{F}$ is defined to be} linear, \emph{i.e.} $\mathbb{F}[a_1^{k_1}+a_2^{k_2}]=(a_1)_{k_1}+(a_2)_{k_2}$, $\mathbb{F}[\lambda a^k]=\lambda (a)_k$, $\lambda\in\mathbb{R}$.

{We can hence} rewrite \eqref{eq.DiscussionCMEOpertorS7} as
\begin{equation}\label{eq.DiscussionCMEOpertorExtraStep1}
  \mathbb{S}[\mathbb{F}[a^k]] = -\frac{k\gamma_A}{\gamma_B} \mathbb{F}[a^{k-1}(a-\mathbb{E}_{\mathcal{P}}[A]) ]
\end{equation}
{which can also be written as}
\begin{equation}\label{eq.DiscussionCMEOpertorExtraStep2}
  \mathbb{S}[\mathbb{F}[a^k]] = -\frac{\gamma_A}{\gamma_B} \mathbb{F}\left[\dfrac{d}{da}[a^{k}](a-\mathbb{E}_{\mathcal{P}}[A]) \right]
\end{equation}
{We now apply {the composition of operator $\mathbb{S}$ with operator $\mathbb{F}$ to}} $(a-\mathbb{E}_{\mathcal{P}}[A])^k$ which yields
\begin{equation}\label{eq.DiscussionCMEOpertorExtraStep3}
  \mathbb{S}[\mathbb{F}[(a-\mathbb{E}_{\mathcal{P}}[A])^k]] = -\frac{\gamma_A}{\gamma_B}\mathbb{F}\left[\dfrac{d}{da}[(a-\mathbb{E}_{\mathcal{P}}[A])^k](a-\mathbb{E}_{\mathcal{P}}[A]) \right]
\end{equation}
{By observing that $\dfrac{d}{da}[(a-\mathbb{E}_{\mathcal{P}}[A])^k]=k(a-\mathbb{E}_{\mathcal{P}}[A])^{k-1}$ we have}
\begin{equation}\label{eq.DiscussionCMEOpertorExtraStep4}
 \mathbb{S}[\mathbb{F}[(a-\mathbb{E}_{\mathcal{P}}[A])^k]] = -k\frac{\gamma_A}{\gamma_B} \mathbb{F}\left[(a-\mathbb{E}_{\mathcal{P}}[A])^{k} \right]
\end{equation}
{We now} use the binomial expansion to rewrite the eigenfunctions {and corresponding eigenvalues, which take the form}
\begin{equation}\label{eq.DiscussionCMEOpertorS9}
  \lambda_n =-n \frac{\gamma_A}{\gamma_B}
\end{equation}
\begin{equation}\label{eq.DiscussionCMEOpertorS10}
  \psi_n(a) = \sum_{k=0}^{n}\binom{n}{k}( -\mathbb{E}_{\mathcal{P}}[A])^k(a)_{n-k}
\end{equation}

{Using the expressions derived for the eigenfunctions} and eigenvalues of $\mathbb{S}$ {we now} proceed %to make progress on
{to derive an expression for} $G(a)$. {In particular, by noting the decomposition of $R(a)$ in terms of the eigenfunctions $\psi_n$ given in Proposition \ref{prop.SeriesExpansion} {(stated in Section~\ref{prop.AppendSeriesExpansion})} we} make the following ansatz on $G(a)$
\begin{equation}\label{eq.VarianceBoundNonLinearProcessHyperExtension0}
  G(a)= M'(1)\sum_{n=0}^{\infty} \sigma_n \zeta(n) \psi_n(a)
\end{equation}
where $\zeta(n)$ is {a function} to be determined and $\sigma_n$ are {as} defined in Proposition \ref{prop.SeriesExpansion}. 
By substituting \eqref{eq.VarianceBoundNonLinearProcessHyperExtension0} in \eqref{eq.VarianceBoundNonLinearProcessS5} we obtain
\begin{equation}\label{eq.VarianceBoundNonLinearProcessHyperExtension2}
  M'(1)\sum_{n=0}^{\infty} \sigma_n \left(1+n\frac{\gamma_A}{\gamma_B}\right) {\zeta(n)} \psi_n(a) =M'(1)R(a)
\end{equation}
Noting the series expansion of $R(a)$ in Proposition \ref{prop.SeriesExpansion} with $r=\mathbb{E}_{\mathcal{P}}[A]$ we deduce that
\begin{equation}\label{eq.VarianceBoundNonLinearProcessHyperExtension4}
  \zeta(n) = \frac{\gamma_B}{n\gamma_A+\gamma_B}
\end{equation}
Therefore from \eqref{eq.DiscussionCMEOpertorS4}, \eqref{eq.VarianceBoundNonLinearProcessHyperExtension0}, \eqref{eq.VarianceBoundNonLinearProcessHyperExtension4} we deduce the following expression for $T'_a(1)$
\begin{equation}\label{eq.VarianceBoundNonLinearProcessS7}
      T'_a(1) = \frac{T_a(1)M'(1)}{\gamma_B}\sum_{n=0}^{\infty} \sigma_n \frac{\gamma_B}{n\gamma_A+\gamma_B} \psi_n(a)
    \end{equation}

We now make use of the series expansion {of $T'_a(1)$ in \eqref{eq.VarianceBoundNonLinearProcessS7}} to compute the moments of $B$.

\noindent {We evaluate} first the unknown term in the right hand side of \eqref{eq.VarianceBoundNonLinearProcessS4}. In particular, by noting that $\mathbb{E}[BR(A)] = \sum_{a=0}^{\infty}R(a)T'_a(1)$ and from the expression of $T'_a(1)$ in \eqref{eq.VarianceBoundNonLinearProcessS7} we have
\begin{equation}\label{eq.VarianceBoundNonLinearProcessS8}
\begin{split}
  \mathbb{E}[BR(A)] &= \frac{M'(1)}{\gamma_B}\sum_{a=0}^{\infty}R(a)\sum_{n=0}^{\infty} \sigma_n \frac{\gamma_B}{n\gamma_A+\gamma_B} \psi_n(a)T_a(1)\\
  & = \frac{\mathbb{E}[Q]}{\gamma_B}\mathbb{E}_{\mathcal{P}}\left[\sum_{n'=0}^{\infty}\sum_{n=0}^{\infty}\sigma_{n'}\sigma_n \frac{\gamma_B}{n\gamma_A+\gamma_B} \psi_{n'}(A)\psi_n(A)\right]
\end{split}
\end{equation}
where $M'(1)= \mathbb{E}[Q]$ and in the second step we have used the results of Proposition~\ref{prop.SeriesExpansion}. {We now} evaluate the expectation in \eqref{eq.VarianceBoundNonLinearProcessS8} as follows
\begin{equation}\label{eq.VarianceBoundNonLinearProcessS10}
 \mathbb{E}_{\mathcal{P}}\left[\sum_{n'=0}^{\infty}\sum_{n=0}^{\infty}\sigma_n'\sigma_n \frac{\gamma_B}{n\gamma_A+\gamma_B} \psi_{n'}(A)\psi_n(A)\right]= \sum_{n=0}^{\infty}\sigma_n^2\frac{\gamma_Bn!(\mathbb{E}_{\mathcal{P}}[A])^n}{n\gamma_A+\gamma_B}
\end{equation}
where the equality holds given the results of Proposition~\ref{prop.MomentsOfPsi} {(stated in Section \ref{prop.AppendixMomentsOfPsi})}. Hence we can now write the following expression for $\mathbb{E}[BR(A)]$
\begin{equation}\label{eq.VarianceBoundNonLinearProcessS10a}
\begin{split}
 &\mathbb{E}[BR(A)]= \frac{M'(1)}{\gamma_B}\sum_{n=0}^{\infty}\sigma_n^2\frac{\gamma_Bn!(\mathbb{E}_{\mathcal{P}}[A])^n}{n\gamma_A+\gamma_B}
\end{split}
\end{equation}
We finally express $var(B)$ as in \eqref{eq.VarianceBoundNonLinearProcessS11} by making use {of \eqref{eq.VarianceBoundNonLinearProcessS2}, \eqref{eq.VarianceBoundNonLinearProcessS4},  %\eqref{eq.VarianceBoundNonLinearProcessS8}
and \eqref{eq.VarianceBoundNonLinearProcessS10a}}
\begin{multline}\label{eq.VarianceBoundNonLinearProcessS11}
     var(B)=\frac{1}{2\gamma_B}\Big[ (\mathbb{E}[Q^2]-\mathbb{E}[Q])\mathbb{E}_{\mathcal{P}}[R(A)]+\frac{2(\mathbb{E}[Q])^2}{\gamma_B}\sum_{n=0}^{\infty}\sigma_n^2\frac{\gamma_Bn!(\mathbb{E}_{\mathcal{P}}[A])^n}{n\gamma_A+\gamma_B}\Big] \\
     +\frac{\mathbb{E}_{\mathcal{P}}[R(A)]\mathbb{E}[Q]}{\gamma_B} - \left(\frac{\mathbb{E}_{\mathcal{P}}[R(A)]\mathbb{E}[Q]}{\gamma_B}\right)^2
\end{multline}
{It should be noted} that from Proposition \ref{prop.MomentsOfPsi} we have $\mathbb{E}_{\mathcal{P}}[\psi_n(A)]=\delta_{n,0}$ and hence by computing the expectation with respect to the Poisson distribution on both sides {of \eqref{eq.FunctionDecomposition}} %in Proposition \ref{prop.MomentsOfPsi}
we have that $\mathbb{E}_{\mathcal{P}}[R(A)] = \sigma_0$. By substituting $\sigma_0$ in \eqref{eq.VarianceBoundNonLinearProcessS11} and through some manipulation we obtain
\begin{equation}\label{eq.VarianceBoundNonLinearProcessS12}
  var(B)= \frac{1}{2\gamma_B} \Big[ (\mathbb{E}[Q^2]-\mathbb{E}[Q])\sigma_0+ \frac{2(\mathbb{E}[Q])^2}{\gamma_B}\sum_{n=1}^{\infty}\frac{\gamma_B\sigma_n^2 n!(\mathbb{E}_{\mathcal{P}}[A])^n}{n\gamma_A+\gamma_B}\Big] + \frac{\mathbb{E}[Q]}{\gamma_B}\sigma_0
\end{equation}
Since all terms in {the summation in} \eqref{eq.VarianceBoundNonLinearProcessS12} are positive, {using only the first term
we obtain}
\begin{equation}\label{eq.VarianceBoundNonLinearProcessS13}
    var(B)> \frac{ (\gamma_A+\gamma_B)\sigma_0(\mathbb{E}[Q^2] + \mathbb{E}[Q]) + 2(\mathbb{E}[Q])^2 \sigma_1^2\mathbb{E}_{\mathcal{P}}[A] }{2\gamma_B(\gamma_B+\gamma_A)}
\end{equation}
\end{proof}

\subsection{Expansion of $R(A)$}\label{prop.AppendSeriesExpansion}
\begin{prop}\label{prop.SeriesExpansion}
Consider a function $R(A)$, $R: \mathbb{Z}_{\geq} \rightarrow \mathbb{R}_{\geq}$ and the function
\begin{equation}\label{eq.PsiFactorialCalculation}
  \psi_n(A) = \sum_{k=0}^{n}\binom{n}{k}(-r)^k(A)_{n-k}
\end{equation}
where $r \in \mathbb{R}_{>}$.
Then the following equality holds
\begin{equation}\label{eq.FunctionDecomposition}
    R(A) = \sum_{n=0}^{\infty} \sigma_n \psi_n(A)
  \end{equation}
where
\begin{equation}\label{eq.SigmaFactorialCalculation}
\sigma_n = \sum_{k={n}}^{\infty} \binom{k}{n}\left.\frac{\Delta_1^k [R](A)}{k!}\right|_{A=0} r^{k-n}
\end{equation}
\end{prop}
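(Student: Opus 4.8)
The plan is to recognize the functions $\psi_n$ as the image, under the linear operator $\mathbb{F}$ introduced in the main proof (the operator sending the monomial $a^k$ to the falling factorial $(a)_k$), of the ordinary shifted monomials $(a-r)^n$. Indeed, expanding $(a-r)^n$ by the binomial theorem and applying $\mathbb{F}$ termwise gives exactly \eqref{eq.PsiFactorialCalculation}. So the claimed identity \eqref{eq.FunctionDecomposition} will follow once I establish the ``pre-image'' statement: if we formally write $R$ via its Newton (finite-difference) series and then pull that series back through $\mathbb{F}^{-1}$ into an ordinary power series in the variable $a$, the resulting power series, re-expanded about the point $r$, has coefficients $\sigma_n$ given by \eqref{eq.SigmaFactorialCalculation}. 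This is the discrete analog of writing a function as a Taylor series about a shifted base point, with $\mathbb{F}$ translating between the ``continuous'' monomial basis and the falling-factorial basis on which finite differences act diagonally in the appropriate sense.

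Concretely, the key steps in order are the following. First I would recall the classical Newton forward-difference expansion
\[
R(A) = \sum_{k=0}^{\infty} \frac{\Delta_1^k[R](A)\big|_{A=0}}{k!}\,(A)_k ,
\]
which is the discrete Taylor theorem about the origin (valid here as a formal identity on $\mathbb{Z}_{\geq}$, and genuinely convergent under the growth hypotheses implicit in Assumption \ref{assum.finiteMQB}). Second, I would apply $\mathbb{F}^{-1}$ to replace each $(A)_k$ by $A^k$, obtaining an ordinary power series $\tilde R(A) := \sum_k c_k A^k$ with $c_k = \Delta_1^k[R](0)/k!$. Third, I would re-expand this power series about the point $r$: write $A^k = ((A-r)+r)^k = \sum_{n=0}^{k}\binom{k}{n} r^{k-n}(A-r)^n$, collect the coefficient of $(A-r)^n$, and read off
\[
\tilde R(A) = \sum_{n=0}^{\infty}\Big(\sum_{k=n}^{\infty}\binom{k}{n} c_k\, r^{k-n}\Big)(A-r)^n = \sum_{n=0}^{\infty}\sigma_n (A-r)^n,
\]
with $\sigma_n$ exactly as in \eqref{eq.SigmaFactorialCalculation}. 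Fourth and finally, I would apply the operator $\mathbb{F}$ to both sides; since $\mathbb{F}$ is linear and $\mathbb{F}[(A-r)^n] = \psi_n(A)$ by the binomial computation of the first paragraph, and $\mathbb{F}[\tilde R] = R$ by construction, this yields \eqref{eq.FunctionDecomposition}.

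The main obstacle I expect is convergence and the legitimacy of the termwise manipulations: the Newton series of an arbitrary $R:\mathbb{Z}_{\geq}\to\mathbb{R}_{\geq}$ need not converge, and the reordering of the double sum $\sum_k \sum_n$ used to pass from expansion-about-$0$ to expansion-about-$r$ requires justification (e.g. absolute convergence of the rearranged series, or a Carlson/Newton-type growth condition on $R$). My plan is to handle this by restricting, exactly as the paper does, to the setting where $R(A)$ leads to a random variable $B$ with finite first two moments and where the relevant sums appearing in the variance computation (in particular $\sum_{a}R(a)T_a(1)$ and $\sum_a R(a)T'_a(1)$) converge absolutely; under these hypotheses the series in \eqref{eq.FunctionDecomposition} is to be understood in the sense needed for the moment identities in the proof of Proposition \ref{prop.VarianceBoundNonLinearProcess}, and the interchange of summations is justified by dominated convergence against the Poisson weights $T_a(1)$, whose tails decay faster than any polynomial. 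A secondary, purely bookkeeping point is to verify the operator identity $\mathbb{F}[(A-r)^n]=\psi_n(A)$ and the invertibility of $\mathbb{F}$ on the space of polynomials (both $\{a^k\}$ and $\{(a)_k\}$ are bases of $\mathbb{R}[a]$, with the change-of-basis given by Stirling numbers), which is routine.
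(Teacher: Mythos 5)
Your proposal is correct and follows essentially the same route as the paper: the Newton forward-difference expansion of $R$ about the origin, followed by the change of basis $(A)_k=\sum_{n=0}^{k}\binom{k}{n}r^{k-n}\psi_n(A)$ (which is exactly your identity $\mathbb{F}[(a-r)^n]=\psi_n(a)$ read in reverse) and a collection of coefficients to obtain \eqref{eq.SigmaFactorialCalculation}. Your extra care about the convergence of the rearranged double sum defining $\sigma_n$ addresses a point the paper passes over lightly, but it does not alter the argument.
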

\begin{proof}[Proof of Proposition \ref{prop.SeriesExpansion}]
{We consider the Newton series expansion of the function $R(A)$}
\begin{equation}\label{eq.ExpansionProofS1}
  R(A) = \sum_{k=0}^{\infty} \rho_k(A)_k, \qquad \rho_k = \left.\frac{\Delta_1^k[R](A)}{k!}\right|_{A=0}
\end{equation}
{It should be noted that the Newton series in \eqref{eq.ExpansionProofS1} {exists} %and converges
%pointwise
for all integer numbers {$A \in \mathbb{Z}_\geq$} for any function {$R:\mathbb{Z}_\geq \to \mathbb{R}_>$}, {see \cite[Ch. VII, p.358/360]{Jordan1965}, \cite{Koenig2021}, \cite{Spiegel1994}}.}

\noindent We now make use of the fact that the falling factorial can be expressed in terms of the functions $\psi_n(A)$. In particular, the following relation holds which can be verified by substituting $\psi_n(A)$ in the right hand side of \eqref{eq.ExpansionProofS2}.
\begin{equation}\label{eq.ExpansionProofS2}
  (A)_k = \sum_{n=0}^{k} \binom{k}{n}r^{k-n}\psi_n(A)
\end{equation}
Substituting \eqref{eq.ExpansionProofS2} into \eqref{eq.ExpansionProofS1} yields
\begin{equation}\label{eq.ExpansionProofS3}
\begin{split}
  &R(A) = \sum_{n=0}^{\infty} \sigma_n \psi_n(A)\\
  &\sigma_n = \sum_{k={n}}^{\infty} \binom{k}{n}\left.\frac{\Delta_1^k [R](A)}{k!} \right|_{A=0}r^{k-n}
\end{split}
\end{equation}
\end{proof}
\subsection{Proof of Proposition \ref{prop.VarianceBoundLinearProcess}}\label{pf.VarianceBoundLinearProcess}
\begin{proof}[Proof of Proposition \ref{prop.VarianceBoundLinearProcess}]
When $R(A)=R_cA$ the series \eqref{eq.FunctionDecomposition}, has $\sigma_n = 0, \forall n \geq 2$. From \eqref{eq.SigmaFactorialCalculation} and \eqref{eq.PsiFactorialCalculation}, by setting $r = \mathbb{E}_{\mathcal{P}}[A]$, we compute $\sigma_0$, $\sigma_1$ and $\psi_0(A), \psi_1(A)$ as follows
\begin{equation}\label{VarianceBoundLinearProcessProofStep1}
\begin{split}
  \sigma_0 &= R_c\mathbb{E_\mathcal{P}}[A] \\
  \sigma_1 &= R_c \\
  \psi_0(A) &= 1 \\
  \psi_1(A) &= A - \mathbb{E_\mathcal{P}}[A]
\end{split}
\end{equation}
By substituting the expressions above in {the right-hand side (RHS) of} \eqref{eq.VarianceDownstreamProcessInequality} we obtain {the RHS of} \eqref{eq.EqualityVarianceLinearProcess}.

{The} covariance matrix of the vector of random variables $[A,B]^T$ %{for nonlinear $R(A)$}
{for a linear {$R(A)=R_c A$}} satisfies {at steady state} {a Lyapunov equation {\cite{Kampen2007}} of the form} \eqref{eq.EqualityVarianceLinearProcessS1} %where $P$ and $D$ depend upon the average of {the propensities}.
\begin{equation}\label{eq.EqualityVarianceLinearProcessS1}
  {P \Sigma + \Sigma P^{T} + D=0}
\end{equation}
{The precise definition {of matrices $P, D$ can be found in\footnote{Note that while bursts are not explicitly taken into account {in these works} they may be easily incorporated.}} \cite{Kampen2007} and \cite{Lestas2008}.} {In particular, for} the case under consideration {the matrices $P$ and $D$ are defined as follows}
{
\begin{equation}\label{eq.MatricesPAndD}
\begin{split}
P =
  \begin{bmatrix}
    -\gamma_A & 0 \\
    \mathbb{E}[Q]R_c & -\gamma_B
  \end{bmatrix},\quad
D =
  \begin{bmatrix}
    F+\gamma_A\mathbb{E}_{\mathcal{P}}[A] & 0 \\
    0 & \mathbb{E}[Q^2]R_c \mathbb{E}_{\mathcal{P}}[A] + \gamma_B\mathbb{E}[B]
  \end{bmatrix}
\end{split}
\end{equation}
}
{and it follows that the solution to \eqref{eq.EqualityVarianceLinearProcessS1} is}
\begin{equation}\label{eq.EqualityVarianceLinearProcessS4}
\Sigma = \begin{bmatrix}
  \mathbb{E}_{\mathcal{P}}[A]                                & \mathbb{E}[Q]\frac{R_c\mathbb{E}_{\mathcal{P}}[A]}{(\gamma_A+\gamma_B)} \\
  \mathbb{E}[Q]\frac{R_c\mathbb{E}_{\mathcal{P}}[A]}{(\gamma_A+\gamma_B)} &\frac{R_c\mathbb{E}_{\mathcal{P}}[A][R_c2(\mathbb{E}[Q])^2+(\gamma_A + \gamma_B)(\mathbb{E}[Q^2] + \mathbb{E}[Q])]}{2\gamma_B(\gamma_A+\gamma_B)}
\end{bmatrix}
\end{equation}
The element $\Sigma_{2,2}=\frac{R_c\mathbb{E}_{\mathcal{P}}[A][R_c2(\mathbb{E}[Q])^2+(\gamma_A + \gamma_B)(\mathbb{E}[Q^2] + \mathbb{E}[Q])]}{2\gamma_B(\gamma_A+\gamma_B)}$ is the variance of $B$ which {proves the statement {in Proposition \ref{pf.VarianceBoundLinearProcess}}}. %us the statement.
\end{proof}

%%%%%%%%%%%%%
\subsection{Proof of Proposition \ref{prop.COVarianceBoundNonLinearProcess}}\label{pf.COVarianceBoundNonLinearProcess}
\begin{proof}[Proof of Proposition \ref{prop.COVarianceBoundNonLinearProcess}]
{We {use first an} %derive
first an expression for $\mathbb{E}[AB]$ involving $T'_a(1)$ {derived in Section \ref{pf.VarianceBoundNonLinearProcess} and write this by means of $\mathbb{E}_{\mathcal{P}}[ \cdot ]$, where $\mathcal{P}$ is the Poisson distribution in \eqref{def:A_Poisson}.}}
%Consider $T'_a(1)$ as in \eqref{eq.VarianceBoundNonLinearProcessS7} and
{{In particular, note that}} $\mathbb{E}[AB] = \sum_{a=0}^{\infty}aT'_a(1)$. Hence from the expression of $T'_a(1)$ in \eqref{eq.VarianceBoundNonLinearProcessS7} we have
\begin{equation}\label{eq.COVarianceBoundNonLinearProcessProofStep1}
  \mathbb{E}[AB] = \frac{\mathbb{E}[Q]}{\gamma_B} \mathbb{E}_{\mathcal{P}}\left[A\sum_{n=0}^{\infty} \sigma_n \frac{\gamma_B}{n\gamma_A+\gamma_B} \psi_n(A)\right]
\end{equation}
By noting that $A = \mathbb{E}_{\mathcal{P}}[A] \psi_0(A) + \psi_1(A)$
%from \eqref{eq.VarianceBoundNonLinearProcessS7}
we have
\begin{equation}\label{eq.COVarianceBoundNonLinearProcessProofStep2}
\begin{split}%{l@{}l}
&	\mathbb{E}_{\mathcal{P}}\left[A\sum_{n=0}^{\infty} \sigma_n \frac{\gamma_B}{n\gamma_A+\gamma_B} \psi_n(A)\right]=\\
  &= \sum_{n=0}^{\infty} \sigma_n \bigg[ \mathbb{E}_{\mathcal{P}}[A] \mathbb{E}_{\mathcal{P}}[\psi_0(A)\psi_n(A)] + \frac{\gamma_B}{\gamma_A+\gamma_B}\mathbb{E}_{\mathcal{P}}[\psi_1(A)\psi_n(A)] \bigg]\\
  &= \sigma_0 \mathbb{E}_{\mathcal{P}}[A]+\sigma_1 \frac{\gamma_B}{\gamma_A+\gamma_B}\mathbb{E}_{\mathcal{P}}[A]
\end{split}
\end{equation}
where the last step follows from Proposition \ref{prop.MomentsOfPsi} {(stated in Section \ref{prop.AppendixMomentsOfPsi}).}
From the definition of covariance we have
\begin{equation}\label{eq.COVarianceBoundNonLinearProcessProofStep3}
\begin{split}%{l@{}l}
cov(A,B)&=\mathbb{E}[AB]-\mathbb{E}[A]\mathbb{E}[B]  \\
&= \frac{\mathbb{E}[Q]}{\gamma_B}\Big[ \sigma_0 \mathbb{E}_{\mathcal{P}}[A] +\sigma_1 \frac{\gamma_B}{\gamma_A+\gamma_B}\mathbb{E}_{\mathcal{P}}[A] - \mathbb{E}_{\mathcal{P}}[A] \mathbb{E}_{\mathcal{P}}[R(A)]\Big]\\
&= \mathbb{E}[Q]\frac{\sigma_1\mathbb{E}_{\mathcal{P}}[A]}{\gamma_B+\gamma_A}
\end{split}
\end{equation}
where the last {equality} {follows from the relation $\sigma_0=\mathbb{E}_{\mathcal{P}}[R(A)]$}.
% is made possible from \eqref{VarianceBoundLinearProcessProofStep1}.
\end{proof}

\subsection{Properties of $\psi_n$ and $\sigma_n$}\label{prop.AppendixMomentsOfPsi}
The proof of the following proposition is based {on ideas} in \cite{Ogura1972}.% and it is omitted due to page constraints.
\begin{prop}\label{prop.MomentsOfPsi}
Consider function $\psi_n(A)$ as in Proposition \ref{prop.SeriesExpansion} with \\ $r=\mathbb{E}_{\mathcal{P}}[A]$, where $A$ is a random variable with Poisson distribution $\mathcal{P}$. Then the following equalities hold
\begin{equation}\label{eq.MomentOfPsiProof}
  \mathbb{E}_{\mathcal{P}}[\psi_n(A)] = \delta_{n,0}, \quad \mathbb{E}_{\mathcal{P}}[\psi_n(A)\psi_{n'}(A)] = n!(\mathbb{E}_{\mathcal{P}}[A])^n\delta_{n',n}
\end{equation}
\end{prop}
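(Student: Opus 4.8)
The plan is to package the whole family $\{\psi_n\}$ into a single exponential generating function and then to use the probability generating function of the Poisson distribution. Write $r=\mathbb{E}_{\mathcal{P}}[A]$ throughout (so $r=F/\gamma_A$, cf. \eqref{def:A_Poisson}). First I would record the closed form
\[
\Phi(t,a):=\sum_{n=0}^{\infty}\psi_n(a)\frac{t^n}{n!}=e^{-rt}(1+t)^{a},
\]
which follows by inserting the binomial expression \eqref{eq.PsiFactorialCalculation} for $\psi_n$, interchanging the two finite/infinite sums, substituting $m=n-k$, and using the elementary identity $(1+t)^a=\sum_{m\ge 0}\frac{(a)_m}{m!}t^m$ valid for $a\in\mathbb{Z}_{\ge 0}$ (a finite sum). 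Before proceeding I would justify the exchanges of summation with $\mathbb{E}_{\mathcal{P}}[\cdot]$: since $(a)_j\ge 0$ for $a\in\mathbb{Z}_{\ge 0}$ one has $\sum_{n}|\psi_n(a)|\,|t|^n/n!\le e^{r|t|}(1+|t|)^{a}$, and $\mathbb{E}_{\mathcal{P}}\!\big[e^{r|t|}(1+|t|)^{A}\big]=e^{2r|t|}<\infty$ by the Poisson moment generating function, so Fubini applies for every real $t$ (and, in the bilinear step, for every real $s,t$).

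For the first identity, I would take the Poisson expectation of $\Phi(t,A)$: using the Poisson probability generating function $G_A(z)=e^{r(z-1)}$ at $z=1+t$ gives $\mathbb{E}_{\mathcal{P}}[(1+t)^{A}]=e^{rt}$, hence $\mathbb{E}_{\mathcal{P}}[\Phi(t,A)]=e^{-rt}e^{rt}=1$. Comparing the coefficient of $t^n$ in $\sum_{n}\mathbb{E}_{\mathcal{P}}[\psi_n(A)]\,t^n/n!=1$ yields $\mathbb{E}_{\mathcal{P}}[\psi_n(A)]=\delta_{n,0}$. For the orthogonality relation, I would compute the bilinear generating function
\[
\mathbb{E}_{\mathcal{P}}\!\big[\Phi(s,A)\Phi(t,A)\big]
= e^{-r(s+t)}\,\mathbb{E}_{\mathcal{P}}\!\big[((1+s)(1+t))^{A}\big]
= e^{-r(s+t)}e^{r(s+t+st)}=e^{rst},
\]
again by the Poisson generating function with $z=(1+s)(1+t)$. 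Expanding $e^{rst}=\sum_{j\ge 0}\frac{r^{j}}{j!}s^{j}t^{j}$ and matching it against $\sum_{n,n'}\mathbb{E}_{\mathcal{P}}[\psi_n(A)\psi_{n'}(A)]\,s^{n}t^{n'}/(n!\,n'!)$ forces all off-diagonal coefficients to vanish and the diagonal ones to satisfy $\mathbb{E}_{\mathcal{P}}[\psi_n(A)^2]/(n!)^2=r^{n}/n!$, i.e. $\mathbb{E}_{\mathcal{P}}[\psi_n(A)^2]=n!\,r^{n}$. Substituting $r=\mathbb{E}_{\mathcal{P}}[A]$ gives $\mathbb{E}_{\mathcal{P}}[\psi_n(A)\psi_{n'}(A)]=n!\,(\mathbb{E}_{\mathcal{P}}[A])^{n}\delta_{n',n}$, as claimed.

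I expect the only delicate point to be the justification in the first paragraph, namely that these generating-function manipulations are genuine identities of absolutely convergent series with Poisson weights rather than merely formal, so that coefficient comparison is legitimate; once that is in place the rest is a short computation. An alternative (if one prefers to avoid generating functions, in the spirit of \cite{Ogura1972}) would be to use directly that the Poisson factorial moments are $\mathbb{E}_{\mathcal{P}}[(A)_j]=r^{j}$ together with the product formula $(A)_j(A)_l=\sum_{i}\binom{j}{i}\binom{l}{i}i!\,(A)_{j+l-i}$, but this leads to heavier binomial bookkeeping for the same conclusion.
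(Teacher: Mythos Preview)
Your proof is correct and takes a genuinely different route from the paper. The paper proceeds exactly along the lines of the ``alternative'' you sketch in your final sentence: it uses the Poisson factorial moments $\mathbb{E}_{\mathcal{P}}[(A)_k]=r^k$ for the first identity, and for the orthogonality it derives the product formula $(A)_{m'}(A)_m=\sum_{n}n!\binom{m}{n}\binom{m'}{n}(A)_{m+m'-n}$ via the Chu--Vandermonde identity, takes Poisson expectations, and then compares the resulting expression with the one obtained by expanding each $(A)_m$ in the basis $\{\psi_n\}$ through \eqref{eq.ExpansionProofS2}. Your exponential-generating-function argument is more streamlined: the bilinear identity $\mathbb{E}_{\mathcal{P}}[\Phi(s,A)\Phi(t,A)]=e^{rst}$ delivers all the orthogonality relations at once by coefficient extraction, whereas the paper's approach requires the reader to unpack the double sum and see why equating \eqref{eq.MomentOfPsiProof4} with \eqref{eq.MomentOfPsiProof5} for all $m,m'$ forces the diagonal form. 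On the other hand, the paper's argument stays entirely within finite sums and factorial-moment algebra, so no Fubini justification is needed; you correctly identify and handle this point, which is the only place your argument is not purely formal. Either approach is standard for Charlier-type polynomials, and your version is arguably the cleaner of the two.
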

\begin{proof}[Proof of Proposition \ref{prop.MomentsOfPsi}]
The first equality can be derived from the definition of $\psi_n(A)$ in Proposition \ref{prop.SeriesExpansion} with $r=\mathbb{E}_{\mathcal{P}}[A]$, and the fact that for a random variable that has a Poisson distribution it holds that  $\mathbb{E}_{\mathcal{P}}[(A)_k] = {(\mathbb{E}_{\mathcal{P}}[A])^k}$.
By taking the expectation on both sides of \eqref{eq.PsiFactorialCalculation} with respect to the Poisson distribution one obtains
\begin{equation}
   \mathbb{E}_{\mathcal{P}}[\psi_n(A)] = \delta_{n,0}
\end{equation}
In order to prove the second equality we make use of the \emph{Chu-Vandermonde} identity \cite{Roman1984}
\begin{equation}\label{eq.MomentOfPsiProof1}
	(x+y)_m = \sum_{{n}=0}^{m}\binom{m}{{n}}(x)_{m-{n}}(y)_{n}
\end{equation}
{For any $m'\in \mathbb{Z}_\geq$ with $x = A-m'$ and $y=m'$ equation \eqref{eq.MomentOfPsiProof1}} yields
\begin{equation}\label{eq.MomentOfPsiProof2}
\begin{split}
	(A)_m &=\sum_{{n}=0}^{m}\binom{m}{{n}}(A-m')_{m-{n}}(m')_{n} \\
	      &=\sum_{{n}=0}^{m}{n}!\binom{m}{{n}}\binom{m'}{{n}}(A-m')_{m-{n}}
\end{split}
\end{equation}
where the second equation is obtained from the fact that $(m)_{n}={n}!\binom{m}{{n}}$. Multiplying by $(A)_{m'}$ we obtain
\begin{equation}\label{eq.MomentOfPsiProof3}
\begin{split}
  (A)_{m'}(A)_m &= \sum_{n=0}^{m}n!\binom{m}{n}\binom{m'}{n}(A)_{m'}(A-m')_{m-n} \\
			     &= \sum_{n=0}^{m}{n}!\binom{m}{n}\binom{m'}{n}(A)_{m+m'-{n}}
\end{split}
\end{equation}
where we have made use {that for }falling factorials $(A)_{i+j} = (A)_i(A-i)_j$ {\cite{Knuth2021}}. Taking {expectations} on both sides of {\eqref{eq.MomentOfPsiProof3}} leads to the following equality
\begin{equation}\label{eq.MomentOfPsiProof4}
	\mathbb{E}_{\mathcal{P}}[(A)_{m'}(A)_{m}] = \sum_{n=0}^{m}{n}!\binom{m}{{n}}\binom{m'}{{n}} (\mathbb{E}_{\mathcal{P}}[A])^{m+m'-{n}}
\end{equation}
{Another expression for the {left-hand side of \eqref{eq.MomentOfPsiProof4}}} %of $(A)_m$
can be obtained by plugging the expression of the falling factorial from \eqref{eq.ExpansionProofS2} in both $(A)_m$ and $(A)_{m'}$, with $r = \mathbb{E}_{\mathcal{P}}[A]$. This leads to the following equality
\begin{equation}\label{eq.MomentOfPsiProof5}
  \mathbb{E}_{\mathcal{P}}[(A)_{m'}(A)_{m}] =
  {\sum_{n=0}^{m}\sum_{n'=0}^{m'}\binom{m'}{n'}\binom{m}{n} (\mathbb{E}_{\mathcal{P}}[A])^{(m+m'-n-n')}\mathbb{E}_{\mathcal{P}}[\psi_{n}(A)\psi_{n'}(A)]}
\end{equation}
{By equating %the terms of
\eqref{eq.MomentOfPsiProof5} and \eqref{eq.MomentOfPsiProof4} and by expanding the summations} we {obtain} %the following expression
\begin{equation}\label{eq.MomentOfPsiProof6}
  \mathbb{E}_{\mathcal{P}}[\psi_n(A)\psi_{n'}(A)] = n!(\mathbb{E}_{\mathcal{P}}[A])^n\delta_{n,n'}
\end{equation}
\end{proof}

\vspace{0.5cm}
{We now give the following result {that provides an interpretation of the coefficients $\sigma_n$ in \eqref{eq.SigmaFactorialCalculation} as expectations with respect to a Poisson distribution.}
{The expressions for $\sigma_0, \sigma_1$ in Definition \ref{def.s0s1}} are an immediate consequence of the following Proposition.}
\begin{prop}\label{prop.sigmas}
{Consider {function $R(.)$, coefficients $\sigma_n$, and functions $\psi_n(.)$ as in Proposition
\ref{prop.SeriesExpansion} with} $r=\mathbb{E}_{\mathcal{P}}[A]$, {{where $A$ is a random variable with Poisson distribution $\mathcal{P}$.}
Then the following relation holds}}
\begin{equation}\label{eq.ComputationOfSigmas}
	{\sigma_n= \frac{\mathbb{E}_{\mathcal{P}}[\psi_n(A)R(A)]}{n!(\mathbb{E}_{\mathcal{P}}[A])^n}}
\end{equation}
\end{prop}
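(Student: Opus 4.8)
The plan is to read off $\sigma_n$ as a ``Fourier coefficient'' of $R$ with respect to the orthogonal system $\{\psi_n\}$ under the Poisson measure, using the expansion of Proposition~\ref{prop.SeriesExpansion} together with the orthogonality relations of Proposition~\ref{prop.MomentsOfPsi}. First I would invoke Proposition~\ref{prop.SeriesExpansion} with $r=\mathbb{E}_{\mathcal{P}}[A]$ to write $R(A)=\sum_{n'=0}^{\infty}\sigma_{n'}\psi_{n'}(A)$, multiply both sides by $\psi_n(A)$, and take the expectation with respect to the Poisson distribution $\mathcal{P}$, obtaining $\mathbb{E}_{\mathcal{P}}[\psi_n(A)R(A)] = \mathbb{E}_{\mathcal{P}}\big[\sum_{n'=0}^{\infty}\sigma_{n'}\psi_n(A)\psi_{n'}(A)\big]$.

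Next I would interchange the expectation and the infinite sum and apply the second identity of Proposition~\ref{prop.MomentsOfPsi}, namely $\mathbb{E}_{\mathcal{P}}[\psi_n(A)\psi_{n'}(A)] = n!(\mathbb{E}_{\mathcal{P}}[A])^n\delta_{n,n'}$, so that only the term $n'=n$ survives and $\mathbb{E}_{\mathcal{P}}[\psi_n(A)R(A)] = \sigma_n\, n!\,(\mathbb{E}_{\mathcal{P}}[A])^n$. Dividing by $n!(\mathbb{E}_{\mathcal{P}}[A])^n$, which is strictly positive because $\mathbb{E}_{\mathcal{P}}[A]=F/\gamma_A>0$, yields \eqref{eq.ComputationOfSigmas}. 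Finally, the expressions in Definition~\ref{def.s0s1} follow as special cases: for $n=0$, since $\psi_0(A)=1$, one gets $\sigma_0=\mathbb{E}_{\mathcal{P}}[R(A)]$; for $n=1$, since $\psi_1(A)=A-\mathbb{E}_{\mathcal{P}}[A]$, one gets $\sigma_1 = \mathbb{E}_{\mathcal{P}}[(A-\mathbb{E}_{\mathcal{P}}[A])R(A)]/\mathbb{E}_{\mathcal{P}}[A] = -\mathbb{E}_{\mathcal{P}}[R(A)] + \mathbb{E}_{\mathcal{P}}[AR(A)]/\mathbb{E}_{\mathcal{P}}[A]$.

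The main obstacle is the legitimacy of exchanging $\mathbb{E}_{\mathcal{P}}[\cdot]$ with the infinite summation over $n'$; this is a dominated-convergence / Fubini issue rather than an algebraic one. I would justify it by noting that the partial sums $\sum_{n'=0}^{N}\sigma_{n'}\psi_{n'}(A)$ converge pointwise to $R(A)$ for every $A\in\mathbb{Z}_{\ge}$ (the Newton-series convergence underlying Proposition~\ref{prop.SeriesExpansion}), and that under the moment conditions already assumed the quantity $\mathbb{E}_{\mathcal{P}}[\psi_n(A)R(A)]$ is finite, which permits passing to the limit term by term; alternatively one can argue via truncation of $R$ to finitely supported approximants and a limiting argument. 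I would keep this justification brief and lean on the hypotheses already in force rather than spell out the estimates.
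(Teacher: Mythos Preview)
Your proposal is correct and follows essentially the same route as the paper: expand $R(A)$ via Proposition~\ref{prop.SeriesExpansion}, multiply by $\psi_n(A)$, take the Poisson expectation, and invoke the orthogonality relation of Proposition~\ref{prop.MomentsOfPsi} to isolate $\sigma_n$. The paper's proof does not comment on the interchange of expectation and summation, so your brief discussion of that point and the explicit derivation of the $n=0,1$ cases add a little extra detail but do not diverge from the paper's argument.
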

\begin{proof}[Proof of Proposition \ref{prop.sigmas}]
{Consider the expansion in \eqref{eq.FunctionDecomposition}, and let us multiply both sides by $\psi_n(A)$ yielding
\begin{equation}
    \psi_n(A)R(A) = \sum_{n'=0}^{\infty} \sigma_n' \psi_n'(A)\psi_n(A)
\end{equation}
{We now take {expectations}} with respect to the Poisson distribution on both sides
\begin{equation}
	\mathbb{E}_{\mathcal{P}}[\psi_n(A)R(A)] = \sum_{n'=0}^{\infty} \sigma_n' \mathbb{E}_{\mathcal{P}}[\psi_n'(A)\psi_n(A)]
\end{equation}
{Using} Proposition \ref{prop.AppendixMomentsOfPsi} it follows that
\begin{equation}
	\mathbb{E}_{\mathcal{P}}[\psi_n(A)R(A)] = \sigma_n n!(\mathbb{E}_{\mathcal{P}}[A])^n
\end{equation}
{and from} the previous relation we obtain
\begin{equation}
\sigma_n= \frac{\mathbb{E}_{\mathcal{P}}[\psi_n(A)R(A)]}{n!(\mathbb{E}_{\mathcal{P}}[A])^n}
\end{equation}}
\end{proof}

\section{Numerical examples}\label{sec.Examples}
We evaluate the conservativeness of the bound for the variance given in \eqref{eq.VarianceDownstreamProcessInequality} by numerically computing how close this is to the actual variance. We make use of the Stochastic Simulation Algorithm software GillesPy2 available in \cite{Abel2016}.

We select as function $R(A)$ a Hill function that is commonly found in several biological processes
\begin{equation}\label{eq.RHill}
R(a)=\frac{(a/A_0)^{n_h}}{1+(a/A_0)^{n_h}}, n_h \in \mathbb{R}_{>}, A_0 \in \mathbb{R}_{>}
\end{equation}
In Fig. \ref{fig.Fig2} we compute $var(B)$, the bound in \eqref{eq.VarianceDownstreamProcessInequality}, and the \ac{LNA} for \eqref{eq.ChemicalMasterEquationProcessStationary} for the considered $R(a)$ with $n_h= 9$ and $A_0=100$ and $F=1$.
{We consider four different distributions for $Q$ corresponding to the binomial and uniform distributions and 
truncated and shifted versions {of Poisson} and geometric burst distributions \cite{Beentjes2020} {as defined} below. In particular, we consider $Q$ to take values {in the} set {$M=\{1,\ldots,m_N\}\subset \mathbb{Z}_{>}$} and the following probability mass function
{is used as an approximate Poisson distribution for $Q$ {with parameter $\lambda\in\mathbb{R}_>$}}
\begin{equation}\label{eq.approxPoiss}
\mathbb{P}_Q(q)=\frac{\frac{\lambda^{q-1}e^{-\lambda}}{(q-1)!}}{\sum_{k=1}^{m_N}\frac{\lambda^{k-1}e^{-\lambda}}{(k-1)!}}, \forall q \in M
\end{equation}}
{We also consider the following truncated version of the geometric distribution $\mathcal{G}(p)$ with parameter $p$}
\begin{equation}\label{eq.approxGeom}
\mathbb{P}_Q(q)=\frac{p(1-p)^{q-1}}{\sum_{k=1}^{m_N}p(1-p)^{k-1}}, \forall q \in M
\end{equation}

It should be noted that the Hill function \eqref{eq.RHill} is bounded yielding that $\mathbb{E}[R(A)]$ is {finite,}
and also the considered burst distributions have finite support. Hence Remark~\ref{rmk.finitesupport} applies and Assumption \ref{assum.Stationaity} holds. 

The diagrams show that \eqref{eq.VarianceDownstreamProcessInequality} is not conservative regardless of $Q$ and it is very close to $var(B)$ for {a large range} of {values of} $var(A) = \mathbb{E}_{\mathcal{P}}[A]$. The figures display also a comparison of the bound with the variance of $B$ computed through the \ac{LNA} of \eqref{eq.ChemicalMasterEquationProcessStationary}. Note that the \ac{LNA} and the bound in \eqref{eq.VarianceDownstreamProcessInequality} are accurate when $A$ is predominantly in the linear regime of $R(a)$, \emph{i.e.} for high and low values of $A$.

\noindent {The \ac{LNA} performs however particularly poorly for $\mathbb{E}_{\mathcal{P}}[A]={var(A)}\approx A_0$}, \emph{i.e.} $A$ is with high probability in the nonlinear {regime of} $R(A)$.
{The differences} are due to the effect of the coefficients $\sigma_0$ and $\sigma_1$ of the reformulated Newton Expansion which account more precisely for the highly nonlinear regimes of $R(A)$.
\begin{figure}[!htbp]
\centering
\begin{tikzpicture}
\begin{groupplot}[
    group style             = {group size = 2 by 2, vertical sep = 1cm, horizontal sep = 0.6cm, xlabels at = edge bottom , ylabels at = edge left },
    view                    = {0}{90},
    width		    = 0.48\columnwidth,
    height		    = 0.48\columnwidth,
    ylabel 	            = {$var(B)$},
    xlabel 	            = {$\mathbb{E}_{\mathcal{P}}[A]=var(A)$},
    %xmin                    = 0.1,
    xmin                    = 0.1,
    %ymin                    = 50,
    xmax                    = 200,
    group/xlabels at 	     = edge bottom,
    group/ylabels at  	     = edge left,
    ]
\nextgroupplot[
		% North West
		xmajorgrids,
		grid style 				= {thin, dashed, black!20},
                xmajorticks=false, % removes the x data
		]
\addplot
		[
            		smooth,
			draw			= red,
			mark			= none,
			line width		= 0.03cm,
            style          = solid
		]
		table
		[
			x				= MeanA,
			y				= BoundVarB,
		]
        {Bound_SSA_MeanAvsVarBAndBoundBursting_Uniform.txt};\label{plt.BoundBurstUniform}
		\addplot
		[
        	        smooth,
			draw			= black,
			mark			= none,
			line width		= 0.03cm,
		        style          = dashed
		]
		table
		[
			x				= MeanA,
			y				= VarB,
		]
		{VarB_Uniform.txt};\label{plt.VarBurstUniform}
		\addplot
		[
            		smooth,
			draw			= blue,
			mark			= none,
			line width		= 0.03cm,
            		style          = dash dot
		]
		table
		[
			x				= MeanA,
			y				= VarFromLNA,
		]
		{LNA_Bursting_Uniform.txt};\label{plt.LNAVarBurstUniform}

\nextgroupplot[
		% North East
           	%colormap/cool,
            	%colorbar right,
                xmajorticks=false, % removes the x data
		%ticks		= none,
		xmajorgrids,
		grid style 				= {thin, dashed, black!20},
		yticklabel pos=right
		]
\addplot
		[
	            smooth,
			draw			= red,
			mark			= none,
			line width		= 0.03cm,
            style          = solid
		]
		table
		[
			x				= MeanA,
			y				= BoundVarB,
		]
        {Bound_SSA_MeanAvsVarBAndBoundBursting_Geometric.txt};\label{plt.BoundBurstGeometric}
		\addplot
		[
            smooth,
			draw			= black,
			mark			= none,
			line width		= 0.03cm,
            style          = dashed
		]
		table
		[
			x				= MeanA,
			y				= VarB,
		]
		{VarB_Geometric.txt};\label{plt.VarBurstGeomtric}
		\addplot
		[
            		smooth,
			draw			= blue,
			mark			= none,
			line width		= 0.03cm,
        	        style          = dash dot
		]
		table
		[
			x				= MeanA,
			y				= VarFromLNA,
		]
		{LNA_Bursting_Geometric.txt};\label{plt.LNAVarBurstGeometrix}
\nextgroupplot[
		% South West
		xmajorgrids,
		grid style 				= {thin, dashed, black!20},
		]
\addplot
		[
            		smooth,
			draw			= red,
			mark			= none,
			line width		= 0.03cm,
            		style          = solid
		]
		table
		[
			x				= MeanA,
			y				= BoundVarB,
		]
        {Bound_SSA_MeanAvsVarBAndBoundBursting_Poisson.txt};\label{plt.BoundBurstPoisson}
		\addplot
		[
            smooth,
			draw			= black,
			mark			= none,
			line width		= 0.03cm,
            style          = dashed
		]
		table
		[
			x				= MeanA,
			y				= VarB,
		]
		{VarB_Poisson.txt};\label{plt.VarBurstPoisson}
%		\addlegendentry{$var(B)$};
		\addplot
		[
            smooth,
			draw			= blue,
			mark			= none,
			line width		= 0.03cm,
            style          = dash dot
		]
		table
		[
			x				= MeanA,
			y				= VarFromLNA,
		]
		{LNA_Bursting_Poisson.txt};\label{plt.LNAVarBurstPoisson}
\nextgroupplot[
		% South East
		yticklabel pos=right,
		xmajorgrids,
		grid style 				= {thin, dashed, black!20},
	    ]
      \addplot
		[
            		smooth,
			draw			= red,
			mark			= none,
			line width		= 0.03cm,
            		style          = solid
		]
		table
		[
			x				= MeanA,
			y				= BoundVarB,
		]
        {Bound_SSA_MeanAvsVarBAndBoundBursting_Binomial.txt};\label{plt.BoundBurst_Binomial}
		\addplot
		[
            smooth,
			draw			= black,
			mark			= none,
			line width		= 0.03cm,
            style          = dashed
		]
		table
		[
			x				= MeanA,
			y				= VarB,
		]
		{VarB_Binomial.txt};\label{plt.VarBurst_Binomial}
		\addplot
		[
            smooth,
			draw			= blue,
			mark			= none,
			line width		= 0.03cm,
            style          = dash dot
		]
		table
		[
			x				= MeanA,
			y				= VarFromLNA,
		]
		{LNA_Bursting_Binomial.txt};\label{plt.LNAVarBurst_Binomial}
\end{groupplot}
\node[text width=6cm, align=center, anchor = south] at (group c1r1.north) {\subcaption{$Q\sim\mathcal{U}\{1,16\}$\label{fig_unif}}};
\node[text width=6cm, align=center, anchor = south] at (group c2r1.north) {\subcaption{$Q\sim\mathcal{G}(0.5)$\label{fig_geom}}};
\node[text width=6cm, align=center, anchor = south] at (group c1r2.north) {\subcaption{$Q\sim\mathcal{P}(8)$\label{fig_poisson}}};
\node[text width=6cm, align=center, anchor = south] at (group c2r2.north) {\subcaption{$Q\sim\mathcal{B}(20,0.4)$\label{fig_bionmial}}};
\end{tikzpicture}
\caption[test]{Numerical simulation showing the bound obtained from \eqref{eq.VarianceDownstreamProcessInequality} and the computed variance of $B$ for varying $\mathbb{E}_{\mathcal{P}}[A] = F/\gamma_A$ with $F=1, \gamma_B=10^{-2}$ and for different distributions of $Q$ {($\mathcal{P}(\lambda)$, $\mathcal{G}(p)$ refer to the distributions in \eqref{eq.approxPoiss}, \eqref{eq.approxGeom} respectively with $m_N>20$)}. $R(a)$ is a Hill function with parameters $n_h= 9$ and $A_0=100$. (\ref{plt.BoundBurst_Binomial}) displays the bound computed from \eqref{eq.VarianceDownstreamProcessInequality} and (\ref{plt.VarBurst_Binomial}) displays the variance obtained from numerical simulations of \eqref{eq.ChemicalMasterEquationProcessStationary}. The variance computed through a \ac{LNA} of system \eqref{eq.ChemicalMasterEquationProcessStationary} is displayed as (\ref{plt.LNAVarBurst_Binomial})}
\label{fig.Fig2}
\end{figure}
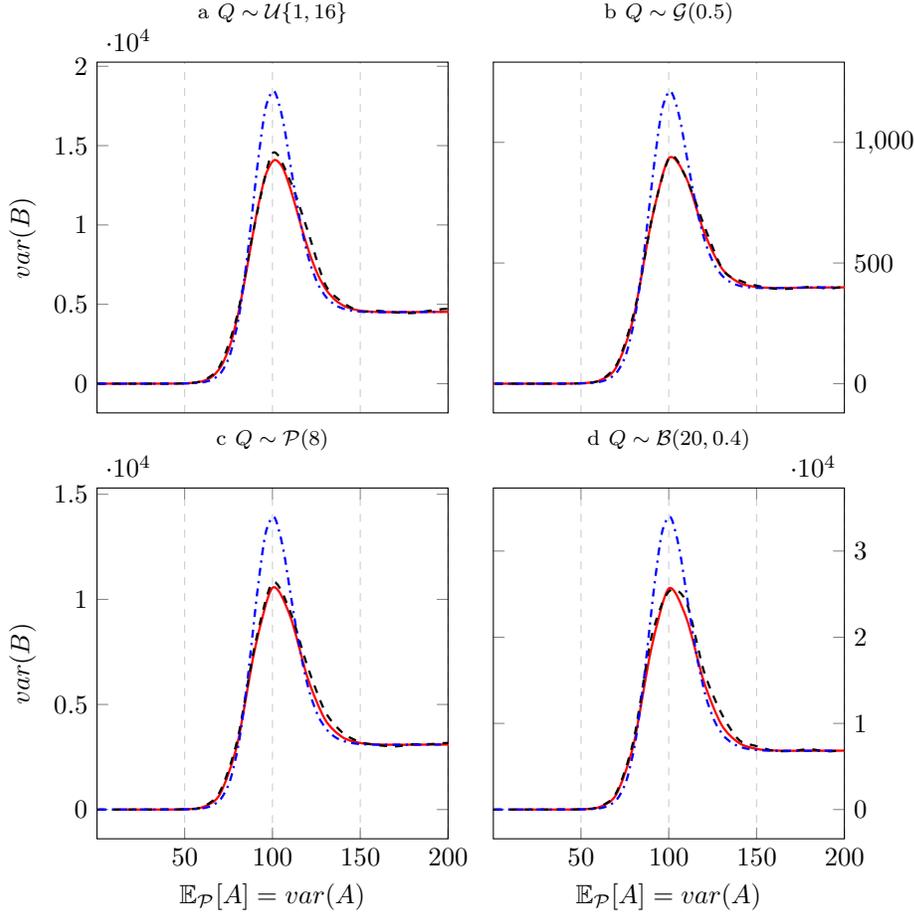
The left hand side of Fig. \ref{fig.Fig3} shows the relative error, expressed as a percentage, between the variance of $B$ and the bound corresponding to different values of {$\mathbb{E}_{\mathcal{P}}[A]$ (obtained by varying $\gamma_A$)} and $\gamma_B$ when $Q$ has {the distribution in \eqref{eq.approxGeom} with $p=0.5$.}
The right hand side of Fig.\ref{fig.Fig3} displays the relative error between the variance of $B$ and the variance obtained through \ac{LNA}. We find that the bound is still not conservative and performs consistently better than variance obtained through \ac{LNA} for different values of $\gamma_A$ and~$\gamma_B$.
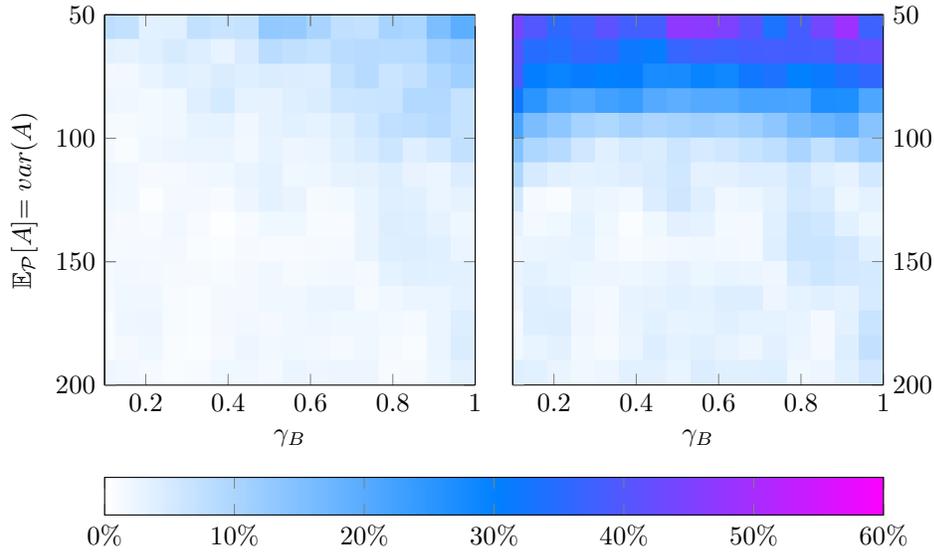
\begin{figure}[!htbp]
\centering
\begin{tikzpicture}
\begin{groupplot}[
    group style             = {group size = 2 by 1, horizontal sep = 0.5cm},
    view                    = {0}{90},
    width					= 0.5\columnwidth,
	height					= 0.5\columnwidth,
    xmin                    = 0.1,
    xmax                    = 1,
    ymin                    = 50,
    ymax                    = 200,
    y dir                   = reverse,
    point meta min          = 0,
    point meta max          = 60
    ]
\nextgroupplot[
ylabel =  {$\mathbb{E}_{\mathcal{P}}[A]{=var(A)}$}, xlabel =  {$\gamma_B$},
            colorbar,
            colormap/cool,
            colorbar horizontal,
            every colorbar/.append style={xticklabel = \pgfmathprintnumber\tick\%,width=
            2*\pgfkeysvalueof{/pgfplots/parent axis height}+
            \pgfkeysvalueof{/pgfplots/group/horizontal sep}},
]
\addplot3[surf,shader=flat]
table{Justbound.txt};
\nextgroupplot[ xlabel =  {$\gamma_B$},yticklabel pos=right,colormap/cool]
\addplot3[surf,shader=flat]
table{FdtAndVarr.txt};
\end{groupplot}
\end{tikzpicture}
\caption{{The left hand side of the figure shows the relative error between the variance of $B$, obtained through a numerical simulation, and the bound in Proposition \ref{prop.VarianceBoundNonLinearProcess}. The right hand side displays the relative error between the variance of $B$, and the \ac{LNA}. $R(a)$ is set to be a Hill function with parameters $n_h= 9$ and $A_0=100$, and $F=1$. The considered burst distribution is {\eqref{eq.approxGeom} with $p=0.5$.}
}}
\label{fig.Fig3}
\end{figure}
\section{Conclusions}\label{sec.Conclusions}
We have derived {an analytical expression that provides} a hard bound for the variance in the molecule numbers of a species formed {with bursts, with a nonlinear rate that depends on another spontaneously formed species.} % } via another species with a nonlinear reaction rate.
The bound follows from {a discrete expansion based on the {\em Newton series}, and exploits spectral properties of the master equation. We have also shown that the  bound holds with equality if the propensity functions are linear. The accuracy of the bound has been investigated with numerical simulations which demonstrate} that this is very close to the actual variance also when the rate of formation of the species is {highly} nonlinear, {and despite the presence of bursts}. {We have also shown that the {\em Newton series} expansion allows to obtain an exact analytical expression for the covariance of the species under consideration. {Directions for future work include an investigation of whether the results in the paper can be extended to more broad classes of reactions, where the Newton series expansion is used as an alternative to Linear noise approximations (which are based on Taylor series expansions), for generating more accurate approximate expressions for the variance.}}

\end{document}